\newtheorem{theorem}{Theorem}[section]
\newtheorem{lemma}[theorem]{Lemma}
\newtheorem{proposition}[theorem]{Proposition}
\newtheorem{corollary}[theorem]{Corollary}
\theoremstyle{definition}
\newtheorem{notation}[theorem]{Notation}
\newtheorem{definition}[theorem]{Definition}
\newtheorem{remark}[theorem]{Remark}
\numberwithin{equation}{section}
\newcommand{\C}{\mathbb{C}}
\newcommand{\e}{\epsilon}
\begin{document}
	\baselineskip=21pt
	\title[On stability in the B\"org's theorem for non-normal periodic Jacobi operators]{Stability in Non-Normal Periodic Jacobi Operators: Advancing B\"org's Theorem}
	%    Information for first author
	\author{Krishna Kumar G.}
	%    Address of record for the research reported here
	\address{Department of Mathematics, University of Kerala, Kariavattom campus, Thiruvananthapuram, Kerala, India, 695581.}
	{\email{krishna.math@keralauniversity.ac.in}
		%    \thanks will become a 1st page footnote.
	%	\thanks{Krishna Kumar G. (SQUID-1984-KG-1801) is supported by the SERB MATRICS grant with project reference no. MTR/2021/000028.}
		%    Information for second author
		\author{V. B. Kiran Kumar}
		\address{Department of Mathematics, Cochin University of Science and Technology, Kerala, India.}
		{\email{vbk@cusat.ac.in}
		%	\thanks{V. B. Kiran Kumar is supported by KSCSTE, Kerala via KSYSA-Research Grant.}
			
			%    General info
			\subjclass[2020]{Primary 47B36; Secondary 47B37}
			
			%\date{January 1, 2001 and, in revised form, June 22, 2001.}
			
			%\dedicatory{This paper is dedicated to our advisors.}
			
			\keywords{B\"org’s theorem, non-normal, periodic Jacobi operator, spectrum, pseudospectrum, path-connectedness.}

		\begin{abstract}
		Periodic Jacobi operators naturally arise in numerous applications, forming a cornerstone in various fields. The spectral theory associated with these operators boasts an extensive body of literature. Considered as discretized counterparts of Schr\"odinger operators, widely employed in quantum mechanics, Jacobi operators play a crucial role in mathematical formulations.
		
		The classical uniqueness result by G. B\"org in $1946$ occupies a significant place in the literature of inverse spectral theory and its applications. This result is closely intertwined with M. Kac's renowned article, 'Can one hear the shape of a drum?' published in $1966$. Since $1975,$ discrete versions of B\"org's theorem have been available in the literature.
		
		In this article, we concentrate on the non-normal periodic Jacobi operator and the discrete versions of B\"org's Theorem. We extend recently obtained stability results to encompass non-normal cases. The existing stability findings establish a correlation between the oscillations of the matrix entries and the size of the spectral gap.
		
		Our result encompasses the current self-adjoint versions of B\"org's theorem, including recent quantitative variations. Here, the oscillations of the matrix entries are linked to the path-connectedness of the pseudospectrum. Additionally, we explore finite difference approximations of various linear differential equations as specific applications.
		\end{abstract}

		\maketitle
		\textbf{Keywords:} Jacobi Operator; B\"org's theorem; Block Toeplitz matrices
		
		\section{Introduction}\label{sec1}
		
		The classical Sturm-Liouville problem involves determining the spectrum for a differential equation $y^{''}+(\lambda+q(x))y=0$ given the boundary conditions. In $1946,$ G. B\"org \cite{borg} addressed the inverse problem of recovering the potential function $q$ when the spectrum and associated boundary conditions are known. His work concludes that in the case of a periodic potential function $q$, the absence of spectral gaps implies that $q$ is constant almost everywhere. Additional related and extended results can be found in contemporary literature \cite{gelfand, krein-1, levinson-1, marchenko-0, marchenko-1, marchenko-2}. Complex potentials were explored in \cite{Buterin19, Hald, karaseva-1, Kwang04}, leading to a non-self-adjoint B\"org-type result.  The classical uniqueness result by G. B\"org, celebrated in inverse spectral theory and applications, is intricately connected to M. Kac's well-known article, "Can one hear the shape of a drum?" in $1966$ \cite{shape}. 
		
	The significance of inverse problems in mathematical physics has been comprehensively elucidated in various articles (see, for example, \cite{Bellman-1}). B\"org's techniques, initially integral to this domain, were later supplanted by more accessible tools, as noted by many mathematicians (see, for example, \cite{hoschd-1, hoschd-2}). The exploration of various adaptations of B\"org's results has remained an active area of research. For instance, in \cite{Fla, goldberg-1, goldberg-2}, authors have characterized potential functions in the presence of exactly $n$ spectral gaps. In cases where exactly one spectral gap exists, $q$ is identified as an elliptic function \cite{hoschd-1}.
	
Additionally, in \cite{Rya}, an estimation of the norm difference between two potentials, $q_1$ and $q_2$, is provided based on the differences in eigenvalues resulting from appropriate boundary problems. The findings in \cite{elec} carry direct applications to the inverse conductivity problem and lead to an analogue of the B\"org-Levinson theorem in multidimensions. Furthermore, \cite{Bellassoued20, Kian18, Pohjola18} presents a multidimensional B\"org-Levinson theorem for magnetic Schr\"odinger operators, while the vector-valued case is explored in \cite{brune95}. Generalizations and variations of these results can be found in \cite{Horv01, Huang, Yurko}. Expanding beyond the periodicity of the potential, a substantial body of literature exists for determining potential functions from scattering data \cite{Barry2000-1, Barry2000-2}. Similar inverse problems are addressed for matrix-valued operators in \cite{Carlson, Clark2002, clark2000}. An analogue for CMV operators with matrix-valued Verblunsky coefficients is obtained in \cite{Maxim10}, and a result for elliptic operators of higher order with constant coefficients is presented in \cite{Krupchyk12}.

A recent and intriguing area of research in spectral theory involves Schr\"odinger operators on finite trees.   B\"org-type problem for operators on finite trees is explored in \cite{Brown2005}. Various discrete versions of these problems are available in multiple articles \cite{Derevyagin06, Derevyagin03, kiran, Gol18, kikri, Shieh04}. A B\"org-type uniqueness theorem for a special class of unitary doubly infinite five-diagonal matrices is derived in \cite{Gesztesy06}, and uniqueness theorems for periodic Jacobi operators with matrix-valued coefficients are presented in \cite{Korotyaev09}.
	 
	 \subsection{From Continuous to Discrete}
	 
Jacobi operators, acting as discretized counterparts to Schr\"odinger operators widely employed in quantum mechanics, have a substantial presence in spectral theory \cite{Sodin}. In this brief overview, we outline the transition from the continuous to the discrete case, leading to the formulation of the Jacobi operator. Detailed explanations can be found in various articles \cite{kiran, kikri}.

It is noteworthy that the continuous case involves the spectrum of the Schr$\ddot{\mbox{o}}$dinger operator $\tilde{A}$, defined on a suitable subspace of $L^2(\mathbb{R})$ by:
\begin{equation}\label{Schrod}
	\tilde{A}u = {-u}^{''} + v \cdot u,
\end{equation}
The application of a straightforward finite difference approximation transforms the Schr$\ddot{\mbox{o}}$dinger operator defined in \eqref{Schrod} into a bounded self-adjoint operator on $\ell^2(\mathbb{Z})$, referred to as the discrete Schr$\ddot{\mbox{o}}$dinger operator. The pivotal concept, as introduced in \cite{kiran}, involves recognizing the discrete Schr$\ddot{\mbox{o}}$dinger operator as a doubly infinite block Toeplitz operator (following appropriate scaling and translation). This identification enables us to leverage the rich theory of spectral analysis applicable to block Toeplitz operators in addressing the problem. Here, we provide a brief explanation of this identification.
Consider the Schr$\ddot{\mbox{o}}$dinger operator with a periodic potential $v$, assuming without loss of generality that the periodicity of $v$ is $1$. Now, approximate the equation \eqref{Schrod} in the interval $[-n,n]$ with $n\in \mathbb{N}\cup \{\infty\}$ by using $p$ equispaced points in each interval $[j,j+1]\subset [-n,n]$. Employing the standard difference, we have:
\[
 {-u}^{''} (x_{i, (j)})= \frac{-u(x_{i+1,(j)})+2 u(x_{i,(j)}) - u(x_{i-1,(j)})}{h^2},
\]
with $h= 1/p$. For $j=-n,\ldots,n-1$, we have $\displaystyle{x_{s,(j)}=j+sh}$ where $s=0, \ldots, p-1$. Letting $n=\infty$, this can be treated as an operator denoted by $A$ acting on the sequence space $\ell^2(\mathbb{Z})$ and is defined by:
\begin{equation}\nonumber
	A(\{u_n\}_{n\in \mathbb{Z}})=
	\frac{-(u_{n-1}+u_{n+1})+2 u_n}{h^2}+v_nu_n; \,\,\{u_n\}_{n\in \mathbb{Z}}\in \ell^2(\mathbb{Z}),
\end{equation}
where the sequence $\{v_n\}_{n\in \mathbb{Z}}$ is obtained as the values of the periodic function $v$ at $p$ equispaced points in an interval of length $1$. The periodicity of $v$ implies that the sequence $\{v_n\}_{n\in \mathbb{Z}}$ is also periodic with period $p$. The matrix representation of $A$ with respect to the standard basis of $\ell^2(\mathbb{Z})$ is obtained as an infinite tridiagonal matrix (up to the scaling factor $h^2$):
\begin{equation}\nonumber
	A= \left[ \begin{array}{cccccccccc}
		\ddots & \ddots  \\
		\ddots & \ddots & \ddots & \\
		& -1 & 2+h^2v(x_{0, (1)}) & -1 & \\
		&   & \ddots &\ddots& \ddots \\
		&   &    &-1 & 2+h^2v(x_{p-1, (1)})  & -1\\
		&  &     &      & -1 & 2+h^2v(x_{0, (2)}) & -1 \\
		&  &     &      &    & \ddots & \ddots & \ddots \\
		&   &    &     &    &  & \ddots 	& \ddots
	\end{array} \right].
\end{equation}
Since $v$ is periodic, we have $v(x_{s,(j)})= v(x_{s,(j+1)})$ for all $s= 0, \ldots, p-1$. Thus, the sequence appearing in the main diagonal becomes periodic with period $p$. When $n$ is finite, the resulting matrix of size $np$ is the truncation of the bi-infinite matrix reported above. When $n=\infty,$ up to some scaling and translation by a scalar multiple of the identity operator, this operator can be identified as a doubly infinite block Toeplitz operator. In this article, we extend our considerations to a more general case involving periodic Jacobi operators.
	 \subsection{Periodic Jacobi operators}
Consider the doubly infinite matrix defined on $\ell^2({\mathbb{Z}})$,
\[
J:=\begin{bmatrix}
	\ddots& \ddots&\\
	\ddots& \ddots&\ddots\\
	&c_{-1}&a_0&b_0\\
	&&c_0&a_1&b_1\\
	&&&\ddots&\ddots&\ddots\\
	&&&&c_{p-2}&a_{p-1}&b_{p-1}\\
	&&&&&c_{p-1}&a_p&b_p\\
	&&&&&&\ddots&\ddots&\ddots \\
	&&&&&&&\ddots&\ddots&
\end{bmatrix}.
\]
Suppose $a=\{a_i\}_{i=-\infty}^\infty, b=\{b_i\}_{i=-\infty}^\infty$, and $c=\{c_i\}_{i=-\infty}^\infty$ are periodic sequences of real numbers with the same period. that is  $a_i= a_{p+i}$, $b_i= b_{p+i}$, and $c_i= c_{p+i}$ for every $i\in \mathbb{Z}$ and some $p\in \mathbb{N}$. Then $J$ is called a periodic Jacobi operator with period $p$. In general, $J$ is a non-normal bounded linear operator defined on $\ell^2(\mathbb{Z})$. Using periodicity, we can identify this operator  as the following doubly infinite block Toeplitz operator defined by
\[
J= \begin{bmatrix}
	\ddots&\ddots&\\
	\ddots&\ddots&\ddots\\
	&A_{1}&A_0&A_{-1}\\
	&&A_{1}&A_0&A_{-1}\\
	&&&\ddots&\ddots&\ddots\\
	&&&&\ddots&\ddots&
\end{bmatrix} \ \ \ \text{where} \ \ A_{-1}= \begin{bmatrix}
	&&&&&\\
	&&&&&\\
	&&&&&\\
	b_{p-1}&&&&
\end{bmatrix},
\]
\[
A_0= \begin{bmatrix}
	a_0&b_0\\
	c_0&a_1&b_1\\
	&c_1&a_2&b_2\\
	&&\ddots&\ddots&\ddots\\
	&&&c_{p-3}&a_{p-2}&b_{p-2}\\
	&&&&c_{p-2}&a_{p-1}
\end{bmatrix}, \ \ \ \text{and} \ \ \
A_{1}= \begin{bmatrix}
	&&&&c_{p-1}\\
	&&&&&\\
	&&&&&\\
	&&&&&\\
	&&&&&
\end{bmatrix}.
\] 
The symbol of $J$ is the matrix valued function $\phi$ defined by
\[
\phi(\theta)= \begin{bmatrix}
	a_0&b_0&&&&c_{p-1}e^{-i\theta}\\
	c_0&a_1&b_1\\
	&c_1&a_2&b_2\\
	&&\ddots&\ddots&\ddots\\
	&&&c_{p-3}&a_{p-2}&b_{p-2}\\
	b_{p-1}e^{i\theta}&&&&c_{p-2}&a_{p-1}
\end{bmatrix}, \ \ \ \  -\pi\leq \theta\leq \pi.
\]
If $a, b, c$ are constant sequences, then the periodic Jacobi operator $J$ becomes the doubly infinite Toeplitz operator with a symbol that is a scalar-valued function.

This article is dedicated to exploring the periodic Jacobi operator and establishing discrete versions of the celebrated B\"org's theorem. Our findings extend beyond normal cases, encompassing non-normal scenarios. The self-adjoint versions are comprehensively covered in \cite{Fla, kiran, KSN1}, along with spectral gap estimates detailed in \cite{Gol18, kikri}. To enhance clarity, we provide illustrative examples utilizing specific linear differential equations.

Furthermore, our objective is to derive a pseudospectral analogue of this result, specifically applicable to non-normal cases. It is important to note that pseudospectrum represents a neighborhood around the spectrum, primarily applied to normal matrices. Consequently, in the self-adjoint case, the path-connectedness of pseudospectrum is linked to the size of the spectral gap. Extending this concept, a non-normal analogue of our previous result establishes a relationship between the oscillations of the sequences $\{a_i\}_{i=0}^{p-1},         \{b_i\}_{i=0}^{p-1}$ with the path-connectedness of the pseudospectrum of the operator. In this pursuit, we extend the stability results obtained in \cite{Gol18} to non-normal cases.

	 \section{Preliminaries}
	 \subsection{Pseudospectra of operators}Consider a separable complex Hilbert space $H$. Let $BL(H)$ represent the algebra comprising all bounded linear operators on $H$, and $\sigma(A)$ denote the spectrum of a given operator $A \in BL(H)$. In the realm of normal operators, a wealth of information can be gleaned from their spectrum. However, when dealing with non-normal operators, the spectrum alone often falls short in providing norm-related insights about the operator. To bridge this gap, pseudospectrum emerges as a valuable tool, offering a nuanced understanding of the behavior exhibited by non-normal operators.
	\begin{definition}(\cite{tre})
		Let $A \in BL(H)$ and $\epsilon > 0$. The $\epsilon$-pseudospectrum of $A$ is defined as
		\[
		\Lambda_{\epsilon}(A) = \sigma(A) \cup \left\{z \in \mathbb{C}: \|(zI - A)^{-1}\| \geq \epsilon^{-1}\right\}.
		\]
		Equivalently, we have $
		\Lambda_{\epsilon}(A) = \bigcup\limits_{\|E\|\leq \epsilon}\{\sigma(A+E) \}.
	$
	\end{definition}
	
	Let $\mathbb{C}^{p\times p}$ denote the algebra of all $p \times p$ complex matrices. For $\epsilon > 0$, define
	\[
	\Delta_\epsilon = \{z \in \C: |z| \leq \epsilon\}.
	\]
	For $\Omega_1, \Omega_2 \subseteq \C$, define
	\[
	\Omega_1 + \Omega_2 = \{z_1 + z_2: z_1 \in \Omega_1, z_2 \in \Omega_2\}.
	\]
	
The pseudospectral theory is extended to operators on Banach spaces and to elements of unital Banach algebras; see \cite{aru, tre1}. Applications of pseudospectrum in various fields, particularly in scenarios involving non-normal matrices and operators, are thoroughly discussed in \cite{tre}. The subsequent properties of the pseudospectrum are later employed.
	
	 \begin{proposition}\label{prop1}
	 	Let $A, B \in BL(H)$ and $\e>0$. Then the following are true
	 	\begin{enumerate}
	 		\item \label{1} $\displaystyle \bigcap_{\e>0}\Lambda_\e(A)= \sigma(A)$.
	 		\item \label{2}  If $A$ is normal then $\Lambda_\e(A)= \sigma(A)+ \Delta_\e$.
	 		\item \label{3} $\Lambda_\e(A)+ \Delta_\delta\subseteq \Lambda_{\e+\delta}(A)$ for every $\delta>0$.
	 		\item  \label{4}  $\Lambda_{\e-\|B\|}(A)\subseteq \Lambda_\e(A+B)\subseteq \Lambda_{\e+\|B\|}(A)$ for every $\e\geq \|B\|$.
	 		\item \label{5}  If $\Lambda_\e(A)$ path-connected for some $\e>0$, then $\Lambda_\delta(A)$ is path-connected for every $\delta\geq \epsilon$.
	 
	 	\end{enumerate}
	 \end{proposition}
	 \begin{proof}
The identity \eqref{1} follows from the definition. For \eqref{2}, \eqref{3}, and \eqref{4}, one may refer to \cite{tre}.

Suppose $\Lambda_\epsilon(A)$ is path-connected for some $\epsilon > 0,$ and $\delta \geq \epsilon$. Then $\Lambda_\epsilon(A) \subseteq \Lambda_\delta(A)$, and every connected component of $\Lambda_\epsilon(A)$ and $\Lambda_\delta(A)$ has a nonempty intersection with $\sigma(A)$; see Theorem $4.3$ of \cite{tre}. If $\Lambda_\delta(A)$ is not path-connected, then it will be the disjoint union of path-connected components. One of them will contain $\Lambda_\epsilon(A)$, as it is path-connected, and others will be disjoint from it. However, all the components should intersect with $\sigma(A)\subset \Lambda_\epsilon(A)$. It follows that $\Lambda_\delta(A)$ is also path-connected.
	 \end{proof}

	 \subsection{From the literature on B\"org-type theorem}
	 We present a few recent results that can be considered as discrete B\"org-type theorems. These results shed light on the stability aspects, providing valuable insights into the   non-normal versions of the discrete B\"org theorem.
	\begin{theorem}\textnormal{(\cite{kri})}\label{spectincl}
		Let $J$ be the periodic Jacobi operator with a matrix-valued symbol $\phi$. Then,
		\begin{enumerate}
			\item $\sigma(J) = \bigcup_{-\pi \leq \theta \leq \pi} \sigma(\phi(\theta))$.
			\item $\Lambda_\epsilon(J) = \bigcup_{-\pi \leq \theta \leq \pi} \Lambda_\epsilon(\phi(\theta))$ for every $\epsilon > 0$.
		\end{enumerate}
	\end{theorem}
	
	 The following result is a straightforward consequence of Theorem \ref{spectincl}.
	 \begin{theorem}\label{mains1}
	 	Let $J$ be the periodic Jacobi operator with matrix valued symbol $\phi$ and $\e> 0$. If $a_i= a, b_i= b$ and $ c_i= c$ for every $i= 0, \ldots, p-1$, then $\sigma(J)$ and $\Lambda_{\epsilon}(J)$ are path-connected.
	 \end{theorem}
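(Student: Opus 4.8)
The plan is to combine Theorem~\ref{spectincl} with an explicit diagonalization of the symbol $\phi(\theta)$ in the constant-coefficient case. Index the rows and columns of $\phi(\theta)$ by $1,\dots,p$ and conjugate by the diagonal unitary $U_\theta=\mathrm{diag}(1,z,z^{2},\dots,z^{p-1})$ with $z=e^{i\theta/p}$. Using $z^{p}=e^{i\theta}$, a direct check shows that conjugation sends the superdiagonal entries $b$ and the corner entry $be^{i\theta}$ all to $bz$, and the subdiagonal entries $c$ and the corner entry $ce^{-i\theta}$ all to $cz^{-1}$, so that
\[
U_\theta^{-1}\phi(\theta)U_\theta = aI + bz\,\Sigma + cz^{-1}\Sigma^{*},
\]
where $\Sigma$ is the $p\times p$ cyclic shift. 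Since $\Sigma$ is a unitary circulant, it is diagonalized by the DFT with eigenvalues running over the $p$-th roots of unity, and $\Sigma^{*}=\Sigma^{-1}$; hence $\phi(\theta)$ is unitarily equivalent to the diagonal matrix with entries
\[
\lambda_j(\theta)=a+b\,e^{i(\theta+2\pi j)/p}+c\,e^{-i(\theta+2\pi j)/p},\qquad j=0,\dots,p-1.
\]

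In particular every $\phi(\theta)$ is normal, so $\Lambda_{\epsilon}(\phi(\theta))=\{w\in\mathbb{C}:\mathrm{dist}(w,\sigma(\phi(\theta)))\le\epsilon\}$ is the closed $\epsilon$-neighbourhood of the finite set $\sigma(\phi(\theta))=\{\lambda_0(\theta),\dots,\lambda_{p-1}(\theta)\}$. Next I would observe that as $\theta$ runs over $(-\pi,\pi]$ and $j$ over $\{0,\dots,p-1\}$, the phase $(\theta+2\pi j)/p$ sweeps each residue class $\bmod\ 2\pi$ exactly once. Therefore, by the spectral-inclusion formula of Theorem~\ref{spectincl},
\[
\sigma(J)=\bigcup_{-\pi<\theta\le\pi}\sigma(\phi(\theta))=\bigl\{\,a+b e^{i\psi}+c e^{-i\psi}:\ \psi\in(-\pi,\pi]\,\bigr\}=g(\mathbb{T}),
\]
the range of the continuous scalar symbol $g(e^{i\psi})=a+b e^{i\psi}+c e^{-i\psi}$ --- consistent with the remark that $J$ is then the scalar-symbol Toeplitz--Laurent operator. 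As the continuous image of the connected set $\mathbb{T}$, $\sigma(J)$ is connected.

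For the pseudospectrum, the same two ingredients give, for $w\in\mathbb{C}$, that $w\in\Lambda_{\epsilon}(J)$ iff $w\in\Lambda_{\epsilon}(\phi(\theta))$ for some $\theta$, iff $\mathrm{dist}(w,\sigma(\phi(\theta)))\le\epsilon$ for some $\theta$, iff $\mathrm{dist}(w,\sigma(J))\le\epsilon$. Thus $\Lambda_{\epsilon}(J)$ is the closed $\epsilon$-neighbourhood of the compact connected set $\sigma(J)$, and the closed $\epsilon$-neighbourhood of a connected set is connected (each point is joined to $\sigma(J)$ by a segment lying in the neighbourhood), so $\Lambda_{\epsilon}(J)$ is connected. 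I expect the only genuinely substantive point to be the diagonalization step: each individual spectrum $\sigma(\phi(\theta))$ consists of just $p$ points and need not be connected, so it is precisely the explicit circulant structure --- which shows that the $p$ branches $\lambda_j(\cdot)$ knit together into the single closed curve $g(\mathbb{T})$ as $\theta$ varies --- that makes connectedness fall out; the remaining steps are routine.
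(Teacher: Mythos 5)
Your proof is correct and takes essentially the same route as the paper, which simply observes that for constant $a,b,c$ the operator $J$ is a Laurent operator with scalar symbol $a+be^{i\psi}+ce^{-i\psi}$ and invokes Theorem~\ref{spectincl}; your circulant diagonalization of $\phi(\theta)$ just makes explicit the reduction the paper treats as known (and matches the eigenvector computation the paper records separately in Lemma~\ref{theorem1}). The details you supply --- the phases $(\theta+2\pi j)/p$ sweeping a full period, normality of $\phi(\theta)$, and the $\epsilon$-neighbourhood description of $\Lambda_\epsilon(J)$ --- are all verified correctly.
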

\begin{proof}
	From Theorem \ref{spectincl}, we have $\sigma(J) = \{a+be^{i\theta}+ce^{-i\theta}: -\pi\leq \theta\leq \pi\}$. Thus, $\sigma(J)$ is the continuous image of the path-connected set $[-\pi, \pi]$, making it path-connected. Also, for $\epsilon > 0$,
	\begin{align*}
		\Lambda_\epsilon(J) &= \{a+be^{i\theta}+ce^{-i\theta}+ \Delta_\epsilon: -\pi\leq \theta\leq \pi\} = \sigma(J) + \Delta_\epsilon.
	\end{align*}
	It follows that $\Lambda_\epsilon(J) $ is also path-connected.
\end{proof}
\begin{notation}\label{omega}
	For a sequence $v=(v_i)_{i=0}^{\infty}$, we define $\omega_v:= \max\{|v_i-v_j|: i,j=0,1,2,\ldots\}$ and $\omega_{v_0}:= \max\{|v_i-v_0|: i= 0,1,2,\ldots\}$. Notice that $\omega_{v_0}\leq \omega_v$. 
\end{notation}

	 The periodic Jacobi operator $J$ is self-adjoint if and only if $b_i=c_i$ for every $i= 0, 1, \ldots, p-1$. In that case, we have the following converse to the above result (see \cite{Gol18,kikri}).
	 \begin{theorem}\label{golkir}
	 	Let $J$ be a self-adjoint periodic Jacobi operator discussed above with period $p$. Then we have $\omega_b\leq (p-1)|\gamma|, \ \omega_a \leq p^2\sqrt{p} |\gamma|, \ \textrm{and} \	\omega_a +\omega_b\geq \frac{ |\gamma|}{4},$
	 	where $\gamma$ is the union of spectral gaps of $J.$
	 \end{theorem}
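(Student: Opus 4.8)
The plan is to transfer everything to the $p\times p$ symbol and exploit the Floquet band picture furnished by Theorem~\ref{spectincl}. Since $b_i=c_i$, each $\phi(\theta)$ is Hermitian and $\phi(-\theta)=\overline{\phi(\theta)}$, so the eigenvalue branches $\lambda_1(\theta)\le\dots\le\lambda_p(\theta)$ are even in $\theta$; the Floquet discriminant $\Delta(\lambda)$ (the trace of the $2\times2$ monodromy matrix of $c_{n-1}u_{n-1}+a_nu_n+b_nu_{n+1}=\lambda u_n$) satisfies $\Delta(\lambda_j(\theta))=2\cos\theta$ on the $j$-th band, so each band is an interval whose endpoints occur only at $\theta\in\{0,\pi\}$. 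Hence, by Theorem~\ref{spectincl}, the $2p$ band edges $E_1\le\dots\le E_{2p}$ are exactly the eigenvalues of the periodic matrix $\phi(0)$ together with those of the anti-periodic matrix $\phi(\pi)$ (equivalently, the roots of $\Delta=2$ and of $\Delta=-2$), the bands are the intervals $[E_{2k-1},E_{2k}]$, and the at most $p-1$ spectral gaps are the $(E_{2k},E_{2k+1})$; thus $|\gamma|$ is, up to the constants sought, the size of the spacings between consecutive equal-type band edges. We may also assume $b_i\neq 0$, as otherwise $J$ splits into finite blocks.

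For the two \emph{upper} bounds, note first that the cyclic relabelling $i\mapsto i+1$ conjugates $J$ by the bilateral shift, changing none of $\sigma(J)$, $|\gamma|$, $\omega_a$, $\omega_b$, so it suffices to bound how far the corner entries $a_{p-1},b_{p-1}$ can deviate from the rest. The engine for $\omega_b$ is that
\[
\phi(0)-\phi(\pi)=2b_{p-1}\bigl(e_1e_p^{*}+e_pe_1^{*}\bigr)
\]
is Hermitian of rank two with eigenvalues $\pm2|b_{p-1}|$: writing it as a difference of two rank-one positive semidefinite matrices and applying Cauchy interlacing twice gives $\lambda_{m+1}(\phi(0))\le\lambda_m(\phi(\pi))\le\lambda_{m-1}(\phi(0))$ and $|\lambda_m(\phi(0))-\lambda_m(\phi(\pi))|\le 2|b_{p-1}|$ for all $m$; inserting these into the band/gap list above, and using $\det\phi(0)-\det\phi(\pi)=(-1)^{p-1}4\prod_i b_i$, pins $|b_{p-1}|$ to a bounded multiple (constant $p-1$) of $|\gamma|$, and running over all cyclic positions and telescoping yields $\omega_b\le(p-1)|\gamma|$. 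For $\omega_a$ there is less to work with: the band edges alone control only the \emph{spread of the $E_l$ within each band}, which is $\le|\gamma|$. Here one invokes the inverse spectral description of periodic Jacobi matrices — $(a,b)$ is determined by the band edges together with the auxiliary Dirichlet-type data — and shows quantitatively that a nearly degenerate band-edge configuration (small $|\gamma|$) forces the whole spectral data, hence $(a,b)$, to be close to that of a constant operator. Combined with the trace identities $\sum_l E_l=2\sum_i a_i$ and $\tfrac12\sum_l E_l^{2}=\sum_i a_i^{2}+2\sum_i b_i^{2}$ (obtained by matching the top two coefficients of $\Delta(\lambda)^2-4=\bigl(\prod_i b_i^{2}\bigr)^{-1}\prod_{l=1}^{2p}(\lambda-E_l)$) and with $\omega_b\le(p-1)|\gamma|$ to freeze $\sum_i b_i^{2}$, this makes $\operatorname{var}\{a_i\}$ \emph{quadratically} small in $|\gamma|$; since $\omega_a\le 2\sqrt{p}\,(\operatorname{var}\{a_i\})^{1/2}$, one gets the (intentionally non-sharp) bound $\omega_a\le p^{2}\sqrt{p}\,|\gamma|$, the factor $\sqrt p$ already arising from passing through the $\ell^2$-quantity $\operatorname{var}\{a_i\}$ and the remaining $p^2$ from a crude estimate on the inverse spectral map. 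I expect this $\omega_a$-bound to be the main obstacle: no single symmetric function of the band edges detects the individual diagonal entries, so one must combine several identities and control the quadratic error terms (in particular the cancellation of the $\sum_i b_i^2$-dependence) uniformly over the $p$ shifts.

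The \emph{lower} bound is a soft perturbation argument based on Theorem~\ref{mains1}. Put $\bar a=\tfrac12(\max_i a_i+\min_i a_i)$, $\bar b=\tfrac12(\max_i b_i+\min_i b_i)$, and let $J_0$ be the constant-coefficient Jacobi operator with all diagonal entries $\bar a$ and all off-diagonal entries $\bar b$; by Theorem~\ref{mains1}, $\sigma(J_0)=[\bar a-2\bar b,\bar a+2\bar b]$ is a single interval. Writing $J-J_0=D+(S+S^{*})$ with $D$ diagonal of entries $a_i-\bar a\in[-\tfrac12\omega_a,\tfrac12\omega_a]$ and $S$ the superdiagonal weighted shift with weights $b_i-\bar b\in[-\tfrac12\omega_b,\tfrac12\omega_b]$, we get $\lVert J-J_0\rVert\le\tfrac12\omega_a+\omega_b\le\omega_a+\omega_b$. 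Since spectra of self-adjoint operators move by at most the operator norm in Hausdorff distance, $\operatorname{dist}_H(\sigma(J),\sigma(J_0))\le\lVert J-J_0\rVert$; because $\sigma(J_0)$ is an interval, the midpoint of any spectral gap of $J$ lying over $\sigma(J_0)$ must be within $\lVert J-J_0\rVert$ of $\sigma(J)$, so that gap has length at most $2\lVert J-J_0\rVert\le 2(\omega_a+\omega_b)$, while $\sigma(J)$ can protrude past each end of $\sigma(J_0)$ by at most $\lVert J-J_0\rVert$. Assembling these contributions gives $|\gamma|\le 4(\omega_a+\omega_b)$, i.e.\ $\omega_a+\omega_b\ge|\gamma|/4$; the only mildly delicate point is making the ``midpoint of a gap'' estimate uniform over all gaps and the two end collars at once, which is what turns the naive factor $2$ into the stated $4$.
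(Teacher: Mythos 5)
The paper does not actually prove this statement: Theorem \ref{golkir} is quoted from \cite{Gol18,kikri} (``see \cite{Gol18,kikri}''), so there is no in-paper argument to compare yours against and your proposal must stand on its own --- and as written it does not. The clearest failure is in the lower bound. You correctly get $\lVert J-J_0\rVert\le\tfrac12\omega_a+\omega_b$ and, from the midpoint argument, that \emph{each individual} gap lying over the interval $\sigma(J_0)$ has length at most $2\lVert J-J_0\rVert$. But $\gamma$ is the \emph{union} of up to $p-1$ gaps, so ``assembling these contributions'' yields only $|\gamma|\le 2(p-1)\lVert J-J_0\rVert+2\lVert J-J_0\rVert$, which is strictly weaker than $|\gamma|\le 4(\omega_a+\omega_b)$ for every $p\ge 3$; no per-gap midpoint estimate can produce a $p$-independent constant. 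One needs a global argument instead, e.g.\ a lower bound on the total bandwidth (via the logarithmic capacity $(\prod_j b_j)^{1/p}$ of $\sigma(J)$) played against an upper bound on the diameter of $\sigma(J)$.

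The two upper bounds are also not established. For $\omega_b$: the interlacing consequence $|\lambda_m(\phi(0))-\lambda_m(\phi(\pi))|\le 2|b_{p-1}|$ points the wrong way (it bounds spectral displacement \emph{by} $|b_{p-1}|$, whereas you need to bound an oscillation of the $b$'s \emph{by} spectral data), and the claim that this ``pins $|b_{p-1}|$ to a bounded multiple of $|\gamma|$'' is false as stated: take $b\equiv 1$ and $a$ with arbitrarily small oscillation, so $|\gamma|$ is arbitrarily small while $|b_{p-1}|=1$. What has to be controlled is $|b_i-b_j|$, and that requires strictly finer spectral data than $\sigma(\phi(0))\cup\sigma(\phi(\pi))$ --- typically the Dirichlet eigenvalues, one in each closed gap, fed into the product formula for the diagonal Green's function. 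For $\omega_a$ you essentially concede the point: the two trace identities you list determine only $\sum_i a_i$ and $\sum_i a_i^2+2\sum_i b_i^2$, which cannot separate $\operatorname{var}\{a_i\}$ from $\sum_i b_i^2$, and ``a crude estimate on the inverse spectral map'' is a placeholder for the entire content of $\omega_a\le p^2\sqrt p\,|\gamma|$. The Floquet setup (band edges as eigenvalues of $\phi(0)$ and $\phi(\pi)$, comparison with the constant-coefficient operator) is the right starting point, but none of the three inequalities is actually derived.
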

	 \begin{remark}
	 	The above result implies that the spectral gap size is in the order of the oscillation of the sequences $a,b.$ In particular, there are no gaps (that means $|\gamma|=0$) if and only if the sequences remain constant. This is precisely the statement of B\"org's theorem, and its discrete versions \cite{borg, Fla, kiran}.
	 \end{remark}
	\begin{remark}
		The discrete version of B\"org's theorem and its extensions to periodic Jacobi cases were proven in \cite{kiran} using the spectral inclusion results of block Toeplitz operators. These findings were further extended to establish the connection between the spectral gap size and the oscillation $\omega_{v}$ of the potential in \cite{kikri}. It is noteworthy that the results, as presented in Theorem \ref{golkir}, were independently obtained in \cite{Gol18}.
	\end{remark}
The primary contribution of this article lies in extending these results to non-normal cases. While B\"org-type results establish a connection between spectral gaps and the potential, our extension to the non-normal case focuses on the connection between the connectedness (or path-connectedness) of the pseudospectrum and the oscillation $\omega_{v}$ of the potential. The non-normal scenario introduces complexity, as the spectrum need not be real-valued. Since spectral information alone is insufficient to describe non-normal operators (due to the absence of a spectral theorem), we naturally consider pseudospectrum as a substitute for the spectrum.
	 \section{B\"org-type theorems for certain non-normal periodic Jacobi operators}
	In this section, we prove the B\"org-type theorem for non-normal periodic Jacobi operators of three kinds. Consider the non-normal periodic Jacobi operator with a matrix-valued symbol $\phi$ defined by
	 \[
	 \phi(\theta)= \begin{bmatrix}
	 	a_0&b_0&&&&c_{p-1}e^{-i\theta}\\
	 	c_0&a_1&b_1\\
	 	&c_1&a_2&b_2\\
	 	&&\ddots&\ddots&\ddots\\
	 	&&&c_{p-3}&a_{p-2}&b_{p-2}\\
	 	b_{p-1}e^{i\theta}&&&&c_{p-2}&a_{p-1}
	 \end{bmatrix}, \ \ \ \  -\pi\leq \theta\leq \pi.
	 \]
	This paper considers the following three cases.
	\begin{enumerate}
		\item $b_1= b_2= \cdots= b_{p-1}$ and $c_1= c_2= \cdots= c_{p-1}$.
		\item $b_1-c_1= b_2-c_2= \cdots= b_{p-1}-c_{p-1}$.
		\item $\{b_i\}$ and $ \{c_i\}$ are non-constant and periodic with the same period.
	\end{enumerate}
	In the above three cases, we obtain results that link the path-connectedness of the pseudospectra of the corresponding non-normal periodic Jacobi operator with the quantities $\omega_a, \omega_b,$ and $\omega_c$. We begin with the following technical lemma.
	 \begin{lemma}\label{theorem1}
	 	Let $\alpha, k\in \mathbb{C}$ and for $-\pi\leq \theta\leq \pi$, define $A(\theta), B(\theta)\in \C^{p\times p}$ by
	 	\[
	 	A(\theta)= \begin{bmatrix}
	 		0&\alpha&&&&\alpha e^{-i\theta}\\
	 		\alpha&0&\alpha\\
	 		&\alpha&0&\alpha\\
	 		&&\ddots&\ddots&\ddots\\
	 		&&&\alpha&{0}&\alpha\\
	 		\alpha e^{i\theta}&&&&\alpha&0
	 	\end{bmatrix}, \ \ \ B(\theta)= 
	 	\begin{bmatrix}
	 		&&&&&ke^{-i\theta}\\
	 		k&&\\
	 		&k&&\\
	 		&&\ddots&&\\
	 		&&&k&&\\
	 		&&&&k&
	 	\end{bmatrix}.
	 	\]
	 	Then the following are true.
	 	\begin{enumerate}
	 		\item $A(\theta)\,B(\theta)= B(\theta)\,A(\theta)$ for every $-\pi\leq \theta\leq \pi$.
	 		\item $B(\theta)\,B(\theta)^*= B(\theta)^*\,B(\theta)= |k|^2I$ for every $-\pi\leq \theta\leq \pi$.
	 		\item $A(\theta)+ B(\theta)$ is normal for every $-\pi\leq \theta\leq \pi$.
	 		\item $\sigma(B(\theta))= \left\{ke^{\frac{i(2r\pi-\theta)}{p}}: r= 0,1, \ldots, p-1\right\}.$
	 		\item $\sigma(A(\theta))= \left\{\alpha e^{\frac{i(2r\pi-\theta)}{p}}+ \alpha e^{\frac{-i(2r\pi-\theta)}{p}}: r= 0,1, \ldots, p-1\right\}.$
	 		\item $\sigma(A(\theta)+ B(\theta))= \left\{(\alpha+k)e^{\frac{i(2r\pi-\theta)}{p}}+ \alpha e^{\frac{-i(2r\pi-\theta)}{p}}: r= 0, 1, \ldots, p-1\right\}.$
	 	\end{enumerate}
	 \end{lemma}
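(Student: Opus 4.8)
The plan is to recognise both matrices as polynomials in a single unitary ``weighted cyclic shift''. I would define $S\in\mathbb{C}^{p\times p}$ to be the matrix with $1$ on the subdiagonal and $e^{-i\theta}$ in the $(1,p)$ position, so that $Se_j=e_{j+1}$ for $1\le j\le p-1$ and $Se_p=e^{-i\theta}e_1$. Then $g(\theta)=kS$ and $f(\theta)=\alpha(S+S^{*})$. Since each row and each column of $S$ carries a single nonzero entry of modulus $1$, one checks directly that $SS^{*}=S^{*}S=I$, so $S$ is unitary and $S^{-1}=S^{*}$; iterating the action on $e_1$ gives $S^{p}=e^{-i\theta}I$. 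This reduction is the whole point; everything else is bookkeeping.

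With this set-up, items (1)--(3) are one-line identities. For (1), $f(\theta)g(\theta)=\alpha k\,(S+S^{-1})S=\alpha k\,(S^{2}+I)=kS\cdot\alpha(S+S^{-1})=g(\theta)f(\theta)$. For (2), $g(\theta)g(\theta)^{*}=k^{2}SS^{*}=k^{2}I=k^{2}S^{*}S=g(\theta)^{*}g(\theta)$. For (3), write $f(\theta)+g(\theta)=(\alpha+k)S+\alpha S^{*}$; expanding $(f+g)(f+g)^{*}$ and $(f+g)^{*}(f+g)$ and using $SS^{*}=S^{*}S=I$, both equal $\big((\alpha+k)^{2}+\alpha^{2}\big)I+\alpha(\alpha+k)\big(S^{2}+(S^{*})^{2}\big)$, whence $f(\theta)+g(\theta)$ is normal.

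For the spectral statements (4)--(6) I would first diagonalise $S$. Solving $Sv=\lambda v$ coordinatewise forces $v=(1,\lambda^{-1},\dots,\lambda^{-(p-1)})^{T}$ together with $\lambda^{p}=e^{-i\theta}$; the $p$ roots $\lambda_r=e^{i(2r\pi-\theta)/p}$, $r=0,\dots,p-1$, are distinct and the corresponding eigenvectors are linearly independent (a Vandermonde matrix), so $S=V\,\mathrm{diag}(\lambda_0,\dots,\lambda_{p-1})\,V^{-1}$ and $S^{-1}=S^{*}=V\,\mathrm{diag}(\lambda_0^{-1},\dots,\lambda_{p-1}^{-1})\,V^{-1}$. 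Then $g(\theta)=kS$, $f(\theta)=\alpha(S+S^{-1})$ and $f(\theta)+g(\theta)=(\alpha+k)S+\alpha S^{-1}$ are simultaneously diagonalised by $V$, and reading off the diagonal entries, using $\lambda_r^{-1}=e^{-i(2r\pi-\theta)/p}$, gives precisely the three claimed spectra.

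I do not anticipate a genuine obstacle: the only point requiring a word of justification is that $S$ is diagonalisable, i.e.\ that the $\lambda_r$ are distinct (equivalently that $e_1$ is a cyclic vector for $S$, so its minimal polynomial is $\lambda^{p}-e^{-i\theta}$), which is immediate. The real content of the lemma is the identification $g(\theta)=kS$, $f(\theta)=\alpha(S+S^{-1})$ with $S$ unitary and $S^{p}=e^{-i\theta}I$; once that is in place, parts (1)--(6) follow by the elementary algebra sketched above.
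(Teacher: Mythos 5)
Your proof is correct and follows essentially the same route as the paper's: your eigenvectors $(1,\lambda_r^{-1},\dots,\lambda_r^{-(p-1)})^{T}$ with $\lambda_r^{p}=e^{-i\theta}$ are exactly the vectors $z_r=[\lambda_r^{p-1},\dots,\lambda_r,1]$ that the paper uses (up to the scalar $\lambda_r^{p-1}$), and the paper likewise treats (1)--(3) as immediate from the definitions. Your packaging via the unitary shift $S$ with $g(\theta)=kS$, $f(\theta)=\alpha(S+S^{*})$ is a tidy reorganization of the same computation, and your explicit Vandermonde/diagonalizability remark makes the equality (rather than mere containment) of the spectra slightly more airtight than the paper's exhibition of eigenvectors alone.
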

	 \begin{proof}
	 	(1), (2), and (3) follows from the definition of $A(\theta)$ and $B(\theta)$.
	 	For the rest of the proof, define $\lambda_r= e^{\frac{i(2r\pi-\theta)}{p}},$ and $z_r= \begin{bmatrix}
	 		\lambda_r^{p-1} \\ \lambda_r^{p-2}\\ \vdots \\ 1
	 	\end{bmatrix}$,  for  $r=0,1,\ldots,p-1;$ and $-\pi\leq \theta\leq \pi$. Then $z_r$ is non-zero, and satisfies the identities;
 	
 	 $B(\theta)z_r= k\lambda_r z_r$, $A(\theta)z_r= \left(\alpha\lambda_r+ \frac{\alpha}{\lambda_r}\right)z_r$ and $	\left(A(\theta)+ B(\theta)\right) z_r= \left(\alpha\lambda_r+ \frac{\alpha}{\lambda_r}+ k\,\lambda_r\right)z_r.$ Hence we get 
 	
 \[
	 	\sigma(B(\theta))= \left\{ke^{\frac{i(2r\pi-\theta)}{p}}: r= 0,1, \ldots, p-1\right\},
	 	\] 
	 	\[
	 		\sigma(A(\theta))= \left\{\alpha e^{\frac{i(2r\pi-\theta)}{p}}+ \alpha e^{\frac{-i(2r\pi-\theta)}{p}}: r= 0,1, \ldots, p-1\right\}, \text{ and }
\] 
 \[
	 	\sigma(A(\theta)+ B(\theta))= \left\{(\alpha+k) e^{\frac{i(2r\pi-\theta)}{p}}+ \alpha e^{\frac{-i(2r\pi-\theta)}{p}}: r= 0,1, \ldots, p-1\right\}.
\] 
	 \end{proof}
	 \begin{corollary}\label{lem2}
	 	Let $-\pi\leq \theta\leq \pi$ and $A(\theta), B(\theta)$ be the matrix valued functions defined in Lemma \ref{theorem1}. Then the union $\bigcup\limits_{-\pi\leq\theta\leq \pi}\{\sigma(A(\theta)+ B(\theta))\}$ is path-connected.
	 \end{corollary}
	 \begin{proof}
	 	From $(6)$ of Lemma \ref{theorem1},
	 	\begin{align*}
	 	\bigcup\limits_{-\pi\leq\theta\leq \pi}	\sigma(A(\theta)+ B(\theta))&= \bigcup_{-\pi\leq\theta\leq \pi}\left\{(\alpha+k) e^{\frac{i(2r\pi-\theta)}{p}}+ \alpha e^{\frac{-i(2r\pi-\theta)}{p}}: r= 0,1, \ldots, p-1\right\}\\
	 		&= \bigcup_{r=0}^{p-1}\left\{(\alpha+k) e^{\frac{i(2r\pi-\theta)}{p}}+ \alpha e^{\frac{-i(2r\pi-\theta)}{p}}: -\pi\leq\theta\leq \pi\right\}\\
	 		&= \bigcup_{r=0}^{p-1}\left\{\alpha\left(e^{\frac{i(\theta-2r\pi)}{p}}+ e^{\frac{-i(\theta-2r\pi)}{p}}\right)+ k e^{\frac{i(2r\pi-\theta)}{p}}: -\pi\leq\theta\leq \pi\right\}\\
	 		&= \bigcup_{r=0}^{p-1}\left\{2\alpha\cos \left(\frac{\theta-2r\pi}{p}\right)+ k e^{\frac{i(2r\pi-\theta)}{p}}: -\pi\leq\theta\leq \pi\right\}.
	 	\end{align*}
	 	For $r= 0, 1, \ldots, p-1$, define $\Omega_r= \left\{2\alpha\cos \left(\frac{\theta-2r\pi}{p}\right)+ k e^{\frac{i(2r\pi-\theta)}{p}}: -\pi\leq\theta\leq \pi\right\}$. Then $\Omega_r$ is path-connected and $\Omega_r \cap \Omega_{r+1}\neq \emptyset$. Hence 
	 	\[
	 	\{\sigma(A(\theta)+ B(\theta)): -\pi\leq\theta\leq \pi\}= \bigcup_{r=0}^{p-1} \Omega_r
	 	\]
	 	is path-connected.
	 \end{proof}
	 \subsection{Special case $b_i's$ and $c_i's$ are constants}\label{case1}
	 \begin{theorem}\label{pseudo}
	 	Let $J$ be the periodic Jacobi operator with matrix valued symbol $\phi$. If $b_1=  \cdots= b_{p-1}$ and $c_1= \cdots= c_{p-1}$, then $\Lambda_{3\omega_{a_0}}(J)$ is path-connected.
	 \end{theorem}
	 \begin{proof}
	 Suppose $b_1 = \cdots = b_{p-1} = b$ and $c_1 = \cdots = c_{p-1} = c$. Then, $\phi(\theta) = f(\theta) + E$, $-\pi \leq \theta \leq \pi$, where
	 	\[
	 	f(\theta)= \begin{bmatrix}a_0&b&&&ce^{- i\theta}\\c&a_0&b&&&\\
	 		&\ddots&\ddots&\ddots&\\
	 		&&c&a_{0}&b\\
	 		be^{i\theta}&&&c&a_{0}\end{bmatrix} \ \ \text{and} \ \
	 	E= \begin{bmatrix}0&&\\
	 		&a_1-a_0&\\
	 		&&\ddots&\\
	 		&&&a_{p-2}-a_0&\\
	 		&&&&a_{p-1}-a_0\end{bmatrix}.
	 	\]
	 From (4) of Proposition \ref{prop1}, with $A = f(\theta)$, $B = E$, and $\epsilon \geq \|E\|$, we obtain
	 \begin{equation}\label{eqincl}
	 	\bigcup_{-\pi \leq \theta \leq \pi} \Lambda_{\epsilon - \|E\|}(f(\theta)) \subseteq \bigcup_{-\pi \leq \theta \leq \pi} \Lambda_{\epsilon}(f(\theta) + E) \subseteq \bigcup_{-\pi \leq \theta \leq \pi} \Lambda_{\epsilon + \|E\|}(f(\theta)).
	 \end{equation}
	 
	 Taking $\epsilon = \|E\|$ in \eqref{eqincl}, and utilizing the fact that $f(\theta)$ is normal (Lemma \ref{theorem1}), we obtain
	 \begin{equation}\label{eqincl1}
	 	\begin{aligned}
	 		\bigcup_{-\pi \leq \theta \leq \pi}\sigma(f(\theta)) \subseteq \bigcup_{-\pi \leq \theta \leq \pi} \Lambda_{\|E\|}(f(\theta)+E) \
	 		=  \Lambda_{\|E\|}(J) \subseteq 	\bigcup_{-\pi \leq \theta \leq \pi}\Lambda_{2\|E\|}(f(\theta)) =\bigcup_{-\pi \leq \theta \leq \pi}\sigma(f(\theta)) + \Delta_{2\|E\|}.
	 	\end{aligned}
	 \end{equation}
	 
From Corollary \ref{lem2}, it is evident that the leftmost and rightmost sets in the above inclusions are path-connected. Therefore, $\Lambda_{\|E\|}(J) + \Delta_{2\|E\|}$ is also path-connected. It is worth noting that the set $\Lambda_{\|E\|}(J)$ is sandwiched between $\bigcup_{-\pi \leq \theta \leq \pi}\sigma(f(\theta))$ and the union of discs of radii $2\|E\|$, centered in the first set. This arrangement ensures that the union of discs of radii $2\|E\|$, with centers in $\Lambda_{\|E\|}(J)$, remains path-connected.
	 
	 Next, we claim that $\Lambda_{3\|E\|}(J)$ is path-connected.
	 From (3) of Proposition \ref{prop1}, we have 
	 \begin{equation}\label{inclu1}
	\begin{aligned}
		\Lambda_{\|E\|}(J) + \Delta_{2\|E\|} \subseteq \Lambda_{3\|E\|}(J).
			\end{aligned}
	\end{equation}
 This inclusion, along with the inclusions in \eqref{eqincl1}, yields the following inclusions;
 \begin{equation}\label{inclu2}
	\begin{aligned}
	 	\bigcup_{-\pi\leq \theta \leq \pi} \sigma(f(\theta))+ \Delta_{2\|E\|}\subseteq  \Lambda_{\|E\|}(J)+\Delta_{2\|E\|}\subseteq \Lambda_{3\|E\|}(J).
	\end{aligned}
\end{equation}
 \begin{equation}\label{inclu3}
	\begin{aligned}
\sigma(J) \subseteq \Lambda_{\|E\|}(J) \subseteq \bigcup_{-\pi \leq \theta \leq \pi} \sigma(f(\theta)) + \Delta_{2\|E\|}.
 	\end{aligned}
\end{equation}
 \begin{equation}\label{inclu4}
	\begin{aligned}
			 \sigma(J) \subseteq \bigcup_{-\pi \leq \theta \leq \pi} \sigma(f(\theta)) + \Delta_{2\|E\|}\subseteq \Lambda_{3\|E\|}(J).
			  	\end{aligned}
		 \end{equation}
	Here the middle term $\bigcup_{-\pi \leq \theta \leq \pi} \sigma(f(\theta)) + \Delta_{2\|E\|}$ is path-connected, and
	 each path-connected component of $\Lambda_{3\|E\|}(J)$ will intersect with $\sigma(J)$.
Hence $\Lambda_{3\|E\|}(J)$ is path-connected. Since $\|E\| \leq \omega_{a_0}$ (from (5) of Proposition \ref{prop1}), $\Lambda_{3\omega_{a_0}}(J)$ is path-connected. This concludes the proof.
	 \end{proof}
	 The converse question is addressed in the following theorem.
	 \begin{theorem}\label{pseudoconverse}
	 	Let $J$ be the periodic Jacobi operator with matrix valued symbol $\phi$. Further $b_1= \cdots= b_{p-1}= b$ and $c_1= \cdots= c_{p-1}= c$. If $\Lambda_{\epsilon}(J)$ is path-connected for some $\e\geq |b-c|$, then $\omega_a\leq 2(\epsilon+|b-c|)(p-1).$
	 \end{theorem}
	 \begin{proof}
	 	Suppose $b_1= \cdots= b_{p-1}= b$ and $c_1= \cdots= c_{p-1}= c$. Then $\phi(\theta)= \phi_1(\theta)+ \phi_2(\theta)$, $-\pi\leq \theta\leq \pi$, where
	 	\[
	 	\phi_1(\theta)= \begin{bmatrix}a_0&b&&&be^{- i\theta}\\
	 		b&a_1&b&&&\\
	 		&\ddots&\ddots&\ddots&\\
	 		&&b&a_{p-2}&b\\
	 		be^{i\theta}&&&b&a_{p-1}\end{bmatrix}, \ \ 
	 	\phi_2(\theta)= \begin{bmatrix}&&&&(c-b)e^{- i\theta}\\ c-b&&&&\\
	 		&\ddots&&&\\
	 		&&&c-b&\end{bmatrix},
	 	\]
	 	and $\|\phi_2(\theta)\|^2= \sigma_{\max}(\phi_2(\theta)^*\phi_2(\theta))=|c-b|^2$. From (4) of Proposition \ref{prop1}, for $\e\geq |c-b|$, 
	 	\[
	 	\bigcup_{-\pi\leq \theta \leq \pi} \Lambda_{\epsilon-|c-b|}(\phi_1(\theta))\subseteq  \bigcup_{-\pi<\theta\leq \pi} \Lambda_{\epsilon}(\phi(\theta))\subseteq \bigcup_{-\pi\leq \theta\leq \pi} \Lambda_{\epsilon+|c-b|}(\phi_1(\theta)).
	 	\]
	 	Note that $\phi_1(\theta)$ is the matrix valued symbol function of the periodic self-adjoint discrete Schr\"odinger operator. Since $\phi_1(\theta)$ is normal, $\Lambda_{\epsilon-|c-b|}(\phi_1(\theta))=\sigma(\phi_1(\theta))+ \Delta_{\epsilon-|c-b|}$ and  $\Lambda_{\epsilon+|c-b|}(\phi_1(\theta))= \sigma(\phi_1(\theta))+ \Delta_{\epsilon+|c-b|}$. Thus
	 	\[
	 	\bigcup_{-\pi\leq \theta\leq \pi} \sigma(\phi_1(\theta))+ \Delta_{\epsilon- |c-b|}\subseteq  \Lambda_\epsilon(J)\subseteq \bigcup_{-\pi\leq \theta\leq \pi} \sigma(\phi_1(\theta))+ \Delta_{\epsilon+ |c-b|}.
	 	\]
	 	The left and right sets are unions of discs with centers $\left\{\sigma(\phi_1(\theta)): -\pi\leq \theta\leq \pi\right\}\subseteq \mathbb{R}$ and radii $\epsilon-|c-b|$ and $\epsilon+ |c-b|$ respectively. The set $\Lambda_\e(J)$ in between them is path-connected implies $\displaystyle \bigcup_{-\pi\leq \theta\leq \pi} \sigma(\phi_1(\theta))+ \Delta_{\epsilon+ |c-b|}$ path-connected. Thus the spectral gap size of the self-adjoint discrete Schr\"odinger operator with symbol $\phi_1(\theta)$ is at most $\epsilon+|c-b|$ (see \cite{kikri}). From Theorem 3.1 of \cite{kikri}, $\omega_a\leq 2(\epsilon+ |c-b|)(p-1).$
	 \end{proof}
\begin{remark}\label{pseudorem}
	Consider the case where $b = c$, and contemplate the self-adjoint periodic Jacobi operator $J$. In this scenario, $\Lambda_{\omega_{a_0}}(J) = \sigma(J) + \Delta_{\omega_{a_0}}$. When the diagonal sequence $a = \{a_i\}_{i=0}^{p-1}$ is constant, it follows that $\omega_{a_0} = 0$. According to Theorem \ref{pseudo}, the spectrum becomes path-connected. Conversely, if the spectrum is path-connected, then $\Lambda_0(J)$ also exhibits path-connectedness. By applying Theorem \ref{pseudoconverse}, it is concluded that $\omega_{a} = 0$, implying $a_0 = a_1 = \cdots = a_{p-1}$. Consequently, Theorem \ref{pseudo} and Theorem \ref{pseudoconverse} serve as the pseudospectral analogues of the discrete B\"org-type theorem for discrete Schr\"odinger operators with constant coefficients; refer to \cite{kiran, kikri}.
\end{remark}
	 
	 \subsection{Special case $c_i-b_i=k$ for every $i= 0, 1,\ldots, p-1$}\label{case2}
	 \begin{theorem}\label{pseudo1}
	 	Let $J$ be the periodic Jacobi operator with matrix valued symbol $\phi$. If $b_i-c_i= k$ for every $i= 0, 1,\ldots,p-1$, then $\Lambda_{3\left(\sqrt{(p-1)\left(\omega_{a_0}^2+2\omega_{b_0}^2\right)}\right)}(J)$ is path-connected.
	 \end{theorem}
	 \begin{proof}
	 	For $-\pi\leq \theta\leq \pi$, define $\phi(\theta)= a_0I+f_1(\theta)+ f_2(\theta)+ f_3(\theta)$ where
	 	\[
	 	f_1(\theta)=\begin{bmatrix}
	 		0&b_0&&&&b_{0}e^{-i\theta}\\
	 		b_0&0&b_0\\
	 		&b_0&0&b_0\\
	 		&&\ddots&\ddots&\ddots\\
	 		&&&b_{0}&{0}&b_{0}\\
	 		b_{0}e^{i\theta}&&&&b_{0}&0
	 	\end{bmatrix}, \ \
	 	f_2(\theta)=
	 	\begin{bmatrix}
	 		&&&&&ke^{-i\theta}\\
	 		k&&\\
	 		&k&&\\
	 		&&\ddots&&\\
	 		&&&k&&\\
	 		&&&&k&
	 	\end{bmatrix}, \ \ \text{and}
	 	\]
	 	\[
	 	f_3(\theta)= \begin{bmatrix}
	 		0&0&&&&(b_{p-1}-b_{0})e^{-i\theta}\\
	 		0&a_1-a_0&b_1-b_0\\
	 		&b_1-b_0&a_2-a_0&b_2-b_0\\
	 		&&\ddots&\ddots&\ddots\\
	 		&&&b_{p-3}-b_{0}&a_{p-2}-a_0&b_{p-2}-b_{0}\\
	 		(b_{p-1}-b_{0})e^{i\theta}&&&&b_{p-2}-b_{0}&a_{p-1}-a_0
	 	\end{bmatrix}.
	 	\]
	 	From (4) of Proposition \ref{prop1}, for $\e\geq \|f_3(\theta)\|$,
	 	\[
	 	\Lambda_{\e-\|f_3(\theta)\|}(a_0I+f_1(\theta)+ f_2(\theta))\subseteq \Lambda_{\e}(\phi(\theta)) \subseteq \Lambda_{\e+\|f_3(\theta)\|}(a_0I+f_1(\theta)+ f_2(\theta)).
	 	\]
	 	Denote $\phi'(\theta)= a_0I+f_1(\theta)+ f_2(\theta)$. From (3) of Lemma \ref{theorem1}, $\phi'(\theta)$ is normal, and
	 	\[
	 	\sigma(\phi'(\theta))+ \Delta_{\e-\|f_3(\theta)\|}\subseteq \Lambda_{\e}(\phi(\theta)) \subseteq \sigma(\phi'(\theta))+ \Delta_{\e+\|f_3(\theta)\|}.
	 	\]
	 	Thus
	 	\begin{align*}
	 		\bigcup_{-\pi\leq \theta\leq\pi}\sigma(\phi'(\theta))+ \Delta_{\e-\|f_3(\theta)\|} \subseteq  \bigcup_{-\pi\leq \theta\leq\pi} \Lambda_{\e}(\phi(\theta)) \subseteq \bigcup_{-\pi\leq \theta\leq\pi}\sigma(\phi'(\theta))+ \Delta_{\e+\|f_3(\theta)\|}.
	 	\end{align*}
	 	From (6) of Lemma \ref{theorem1},
	 	\begin{align*}
	 		\bigcup_{-\pi\leq \theta\leq\pi}\sigma(\phi'(\theta))&= \bigcup_{-\pi\leq\theta\leq \pi} a_0+ \left\{c_0e^{\frac{i(2r\pi-\theta)}{p}}+ b_0e^{\frac{-i(2r\pi-\theta)}{p}}: r= 0, 1, \ldots, p-1\right\}.
	 	\end{align*}
	 	By the similar arguments in the proof of Theorem \ref{pseudo}, we get $\displaystyle \bigcup_{-\pi\leq \theta\leq\pi} \Lambda_{3\|f_3(\theta)\|}(\phi(\theta))$ is path-connected. We also have $\|f_3(\theta)\|\leq\|f_3(\theta)\|_{F}\leq \sqrt{(p-1)\left(\omega_{a_0}^2+2\omega_{b_0}^2\right)}$ for every $-\pi\leq \theta\leq \pi$, where $\|f_3(\theta)\|_{F}$ denotes the Frobenius norm of $\|f_3(\theta)\|$. From (5) of Proposition \ref{prop1},
	 	\[
	 	\bigcup_{-\pi\leq \theta\leq\pi} \Lambda_{3\left(\sqrt{(p-1)\left(\omega_{a_0}^2+2\omega_{b_0}^2\right)}\right)}(\phi(\theta))= \Lambda_{3\left(\sqrt{(p-1)\left(\omega_{a_0}^2+2\omega_{b_0}^2\right)}\right)}(J),
	 	\]
	 	is path-connected.
	 \end{proof}
	 \begin{theorem}\label{pseudo1conv}
	 	Let $J$ be the periodic Jacobi operator with matrix valued symbol $\phi$ and $b_i-c_i= k$ for every $i= 0, 1,\ldots,p-1$. If $\Lambda_{\epsilon}(J)$ is path-connected for some $\e\geq |k|$, then $\omega_a\leq 2(\epsilon+|k|)(p-1)$.
	 \end{theorem}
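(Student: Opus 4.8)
The plan is to adapt the proof of Theorem~\ref{pseudoconverse}, replacing the role of $|b-c|$ by $|k|$ and that of the discrete Schr\"odinger comparison operator by a general self-adjoint periodic Jacobi operator. Keeping the notation of Theorem~\ref{pseudo1}, I would first rewrite the symbol as
\[
\phi(\theta)=\psi(\theta)+f_2(\theta),\qquad \psi(\theta):=a_0I+f_1(\theta)+f_3(\theta),\qquad -\pi<\theta\leq\pi.
\]
Inspecting $f_1$ and $f_3$ shows that $\psi(\theta)$ is Hermitian: its diagonal is $(a_0,a_1,\ldots,a_{p-1})$, its sub- and super-diagonals both carry $(b_0,\ldots,b_{p-2})$, and its corner entries are $b_{p-1}e^{\pm i\theta}$. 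Hence $\psi(\theta)$ is exactly the symbol of the self-adjoint periodic Jacobi operator $\widetilde{J}$ with diagonal sequence $\{a_i\}$ and off-diagonal sequence $\{b_i\}$, so by Theorem~\ref{spectincl} the set $\bigcup_{-\pi<\theta\leq\pi}\sigma(\psi(\theta))$ equals $\sigma(\widetilde{J})$, a finite union of compact real intervals (the bands of $\widetilde{J}$). Moreover $f_2(\theta)$ is the matrix $g(\theta)$ of Lemma~\ref{theorem1} with parameter $k$, and part~(ii) of that lemma gives $f_2(\theta)f_2(\theta)^{*}=k^{2}I$, whence $\|f_2(\theta)\|=|k|$ for every $\theta$.

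The second step is to insert this into the pseudospectral perturbation bound. By Theorem~$52.4$ of \cite{tre}, for each $\theta$ one has $\Lambda_{\epsilon-|k|}(\psi(\theta))\subseteq\Lambda_{\epsilon}(\phi(\theta))\subseteq\Lambda_{\epsilon+|k|}(\psi(\theta))$; taking the union over $\theta$, applying Theorem~\ref{spectincl}, and using the normality of $\psi(\theta)$ (so that $\Lambda_{r}(\psi(\theta))=\sigma(\psi(\theta))+\Delta_{r}$) yields
\[
\sigma(\widetilde{J})+\Delta_{\epsilon-|k|}\subseteq\Lambda_{\epsilon}(J)\subseteq\sigma(\widetilde{J})+\Delta_{\epsilon+|k|}.
\]
The two outer sets are unions of discs centred at the same points, namely those of $\sigma(\widetilde{J})\subseteq\mathbb{R}$. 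Since $\Lambda_{\epsilon}(J)$ is assumed path-connected and contains $\sigma(\widetilde{J})+\Delta_{\epsilon-|k|}$, any two bands of $\sigma(\widetilde{J})$ are joined by a path lying in $\Lambda_{\epsilon}(J)$, and that path necessarily stays inside $\sigma(\widetilde{J})+\Delta_{\epsilon+|k|}$; hence all bands belong to one connected component of $\sigma(\widetilde{J})+\Delta_{\epsilon+|k|}$, so that set is path-connected. This is precisely the reduction performed in the proof of Theorem~\ref{pseudoconverse}.

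Finally, path-connectedness of $\sigma(\widetilde{J})+\Delta_{\epsilon+|k|}$ forces every gap of $\sigma(\widetilde{J})$ to have length at most $2(\epsilon+|k|)$, that is, the spectral gap size of $\widetilde{J}$ is at most $\epsilon+|k|$ in the terminology of Theorem~\ref{pseudoconverse}. Applying Theorem~$3.1$ of \cite{kikri} to $\widetilde{J}$ then gives $\omega_{a}\leq 2(\epsilon+|k|)(p-1)$, which is the assertion. I expect the two delicate points to be: the topological transfer in the second step --- that path-connectedness of the sandwiched set $\Lambda_{\epsilon}(J)$ genuinely passes to the larger union of discs, which uses crucially that the inner and outer unions share their centres --- and the verification that the quantitative self-adjoint Borg-type estimate of \cite{kikri} is being invoked in the form that bounds $\omega_{a}$ by a multiple of the maximal spectral gap with the factor $(p-1)$ (rather than the weaker $p^{2}\sqrt{p}$ appearing in Theorem~\ref{golkir}); the computation $\|f_2(\theta)\|=|k|$ and the bookkeeping behind $\phi(\theta)=\psi(\theta)+f_2(\theta)$ are routine.
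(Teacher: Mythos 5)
Your proposal is correct and follows essentially the same route as the paper: the paper's proof simply repeats the argument of Theorem~\ref{pseudoconverse} after regrouping the symbol into a Hermitian part (your $\psi(\theta)=a_0I+f_1(\theta)+f_3(\theta)$, which is exactly the paper's redefined $f_1(\theta)$ with diagonal $\{a_i\}$ and off-diagonals $\{b_i\}$) plus the $g(\theta)$-type perturbation of norm $|k|$, then applies the disc-sandwich argument and Theorem~$3.1$ of \cite{kikri}. Your write-up just makes explicit the steps the paper leaves as ``a mere imitation.''
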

	 \begin{proof}
	 	For $-\pi\leq \theta \leq \pi$, define $\phi(\theta)= \phi_1(\theta)+ \phi_2(\theta)$ where
	 	\[
	 	\phi_1(\theta)= \begin{bmatrix}a_0&b_0&0&\cdots&0&b_{p-1}e^{- i\theta}\\b_0&a_1&b_1&\cdots&0&0\\\vdots&\vdots&\cdots&\vdots&\vdots&\vdots\\0&0&0&\cdots&a_{p-2}&b_{p-2}\\
	 		b_{p-1}e^{i\theta}&0&0&\cdots&b_{p-2}&a_{p-1}\end{bmatrix} \ \ \text{and} \ \
	 	\phi_2(\theta)= \begin{bmatrix}0&0&\cdots&0&ke^{- i\theta}\\ k&0&\cdots&0&0\\\vdots&\vdots&\vdots&\vdots&\vdots\\0&0&\cdots&0&0\\
	 		0&0&\cdots&k&0\end{bmatrix}.
	 	\]
	 	From (2) of Lemma \ref{theorem1}, $\|\phi_2(\theta)\|= |k|$ and the result follows. The proof is a mere imitation of Theorem \ref{pseudoconverse}.
	 \end{proof}

\begin{remark}\label{pseudo1remark}
	The significance of Theorem \ref{pseudo1} lies in demonstrating that the path-connectedness of the pseudospectrum of a periodic Jacobi operator is proportional to the oscillations $\omega_{a_0}$ and $\omega_{b_0}$ with an apparent dependence on the period $p$. Consider the case of a self-adjoint periodic Jacobi operator $J$ (i.e., $b_i = c_i$ for every $i = 0,1,\ldots, p-1$). In this scenario, $\Lambda_{\sqrt{(p-1)\left(\omega_{a_0}^2+2\omega_{b_0}^2\right)}}(J) = \sigma(J) + \Delta_{\sqrt{(p-1)\left(\omega_{a_0}^2+2\omega_{b_0}^2\right)}}$. When the diagonal sequence $a=\{a_i\}_{i=0}^{p-1}$ is constant, it follows that $\omega_{a_0} = 0$. As per Theorem \ref{pseudo1}, $\sigma(J) + \Delta_{3\left(\sqrt{2(p-1)}\omega_{b_0}\right)}$ is path-connected. Conversely, if the spectrum is path-connected, then $\Lambda_0(J)$ also demonstrates path-connectedness. By utilizing Theorem \ref{pseudo1conv}, it is concluded that $\omega_{a} = 0$, and consequently, $a_0 = a_1 = \cdots = a_{p-1}$. Thus, Theorem \ref{pseudo1} and Theorem \ref{pseudo1conv} stand as pseudospectral analogues of the discrete B\"org-type theorem for discrete Schr\"odinger operators with periodic coefficients.
\end{remark}
	 
	 \subsection{Special case $a,b,c$ are periodic with same period $p$}\label{case3}
	 \begin{theorem}\label{pseudo2}
	 	Let $J$ be the periodic Jacobi operator with matrix valued symbol $\phi$ and $a= \{a_i\}_{i=0}^{p-1}, b= \{b_i\}_{i=0}^{p-1}$, $c= \{c_i\}_{i=0}^{p-1}$ are of same period. Define $\omega_{b_0c}= \max\{|b_0-c_i|: i= 0, 1, \ldots, p-1\}$, then $\Lambda_{3\left(\sqrt{(p-1)\left(\omega_{a_0}^2+\omega_{b_0}^2+\omega_{b_0c}^2\right)}\right)}(J)$ is path-connected.
	 \end{theorem}
	 \begin{proof}
	 	For $-\pi\leq \theta\leq \pi$, define $\phi(\theta)= a_0I+f_1(\theta)+f_2(\theta)$ where 
	 	\[
	 	f_1(\theta)=\begin{bmatrix}
	 		0&b_0&&&b_0e^{-i\theta}\\
	 		b_0&0&b_0\\
	 		&\ddots&\ddots&\ddots\\
	 		&&b_0&0&b_0\\
	 		b_0e^{i\theta}&&&b_0&0
	 	\end{bmatrix}\]
	 	and 
	 	\[
	 	f_2(\theta)=\begin{bmatrix}
	 		0&\cdots&&&0&(c_{p-1}-b_0)e^{-i\theta}\\
	 		c_0-b_0&a_1-a_0&b_1-b_0\\
	 		&c_1-b_0&a_2-a_0&b_2-b_0\\
	 		&&\ddots&\ddots&\ddots\\
	 		&&&c_{p-3}-b_0&a_{p-2}-a_0&b_{p-2}-b_0\\
	 		(b_{p-1}-b_0)e^{i\theta}&&&&c_{p-2}-b_{0}&a_{p-1}-a_0
	 	\end{bmatrix}.\]
	 	From (4) of Proposition \ref{prop1}, for $\e\geq \|f_2(\theta)\|$,
	 	\[
	 	\Lambda_{\e-\|f_2(\theta)\|}(a_0I+f_1(\theta))\subseteq \Lambda_{\e}(\phi(\theta)) \subseteq \Lambda_{\e+\|f_2(\theta)\|}(a_0I+f_1(\theta)).
	 	\]
	 	From (3) of Lemma \ref{theorem1}, $a_0I+f_1(\theta)$ is normal and
	 	\[
	 	\sigma(a_0I+f_1(\theta))+ \Delta_{\e-\|f_2(\theta)\|}\subseteq \Lambda_{\e}(\phi(\theta))\subseteq \sigma(a_0I+f_1(\theta))+ \Delta_{\e+\|f_2(\theta)\|}.
	 	\]
	 	Thus
	 	\begin{align*}
	 		\bigcup_{-\pi\leq \theta\leq \pi}\sigma(\phi(\theta))+ \Delta_{\e+\|f_2(\theta)\|} \subseteq  \bigcup_{-\pi\leq \theta\leq \pi} \Lambda_{\e}(\phi(\theta)) \subseteq \bigcup_{-\pi\leq \theta\leq\pi}\sigma(\phi(\theta))+ \Delta_{\e+\|f_2(\theta)\|}.
	 	\end{align*}
	 	From (6) of Lemma \ref{theorem1},
	 	\begin{align*}
	 		\bigcup_{-\pi\leq \theta<\pi}\sigma(\phi(\theta))&= \bigcup_{-\pi<\theta\leq \pi} a_0+ \left\{b_0e^{\frac{-i(2r\pi-\theta)}{p}}: r= 0, 1, \ldots, p-1\right\}.
	 	\end{align*}
	 	By the similar arguments in the proof of Theorem \ref{pseudo}, we get $\displaystyle \bigcup_{-\pi\leq \theta\leq \pi} \Lambda_{3\|f_2(\theta)\|}(\phi(\theta))$ is path-connected. We also have $\|f_2(\theta)\|\leq\|f_2(\theta)\|_{F}\leq \sqrt{(p-1)\left(\omega_{a_0}^2+\omega_{b_0}^2+\omega_{b_0c}^2\right)}$ for every $-\pi\leq \theta\leq \pi$. From (5) of Proposition \ref{prop1}, 
	 	\[
	 	\bigcup_{-\pi\leq \theta\leq \pi} \Lambda_{3\left(\sqrt{(p-1)\left(\omega_{a_0}^2+\omega_{b_0}^2+\omega_{b_0c}^2\right)}\right)}(\phi(\theta))= \Lambda_{3\left(\sqrt{(p-1)\left(\omega_{a_0}^2+\omega_{b_0}^2+\omega_{b_0c}^2\right)}\right)}(J)
	 	\]
	 	is path-connected. 
	 \end{proof}
	 \begin{theorem}\label{pseudo2conv}
	 	Let $J$ be the periodic Jacobi operator with matrix valued symbol $\phi$ and $a= \{a_i\}_{i=0}^{p-1}, b= \{b_i\}_{i=0}^{p-1}$, $c= \{c_i\}_{i=0}^{p-1}$ are of same period. Define $\omega_{bc}= \max\{|b_i-c_i|: i= 0, 1, \ldots, p-1\}$. If $\Lambda_{\epsilon}(J)$ is path-connected for some $\e\geq \omega_{bc}$, then $\omega_a\leq 2(\epsilon+\omega_{bc})(p-1)$.
	 \end{theorem}
	 \begin{proof}
	 	For $-\pi\leq \theta\leq \pi$, define $\phi(\theta)= \phi_1(\theta)+ \phi_2(\theta)$ where
	 	\[
	 	\phi_1(\theta)= 
	 	\begin{bmatrix}a_0&b_0&0&\cdots&0&b_{p-1}e^{- i\theta}\\b_0&a_1&b_1&\cdots&0&0\\\vdots&\vdots&\cdots&\vdots&\vdots&\vdots\\0&0&0&\cdots&a_{p-2}&b_{p-2}\\
	 		b_{p-1}e^{i\theta}&0&0&\cdots&b_{p-2}&a_{p-1}
	 	\end{bmatrix}
	 	\]
	 	and
	 	\[
	 	\phi_2(\theta)= \begin{bmatrix}0&0&\cdots&0&(c_{p-1}-b_{p-1})e^{- i\theta}\\ c_{0}-b_{0}&0&\cdots&0&0\\\vdots&\vdots&\vdots&\vdots&\vdots\\0&0&\cdots&0&0\\
	 		0&0&\cdots&c_{p-2}-b_{p-2}&0\end{bmatrix}.
	 	\]
	 	From (2) of Lemma \ref{theorem1}, $\|\phi_2(\theta)\|= \omega_{bc}$ and the result follows. The proof is a mere imitation of Theorem \ref{pseudoconverse}.
	 \end{proof}
\begin{remark}\label{pseudo2remark}
	The interconnected results of Theorem \ref{pseudo2} and Theorem \ref{pseudo2conv} highlight that the path-connectedness of the pseudospectrum of a periodic Jacobi operator is aligned with the order of oscillations $\omega_{a_0}$, $\omega_{b_0}$, $\omega_{b_0c}$, and $\omega_{bc}$ with an apparent dependency on the period $p$. These theorems share a common theme with Theorem \ref{pseudo} (corresponding to Theorem \ref{pseudo1}) and Theorem \ref{pseudo2conv} aligns with the spirit of both Theorem \ref{pseudoconverse} and Theorem \ref{pseudo1conv}.
\end{remark}
	 
	 \section{Illustration via Differential Equations}\label{sec4}
	In this section, we present concrete situations where our results find application. The applications of the results developed are illustrated through three types of differential equations:
	 \begin{enumerate}
	 	\item $g_1y'+ g_2y= g_3$ with $g_1$ and $g_2$ are periodic functions with same period.
	 	\item $y^{''}+ g_1y^{'}+ g_2y= g_3$ with $g_1$ is constant and $g_2$ is periodic.
	 	\item $y^{''}+ g_1y^{'}+ g_2y= g_3$ with $g_1, g_2$ are periodic functions with the same period. 
	 \end{enumerate}
	 \subsection{$g_1y'+ g_2y= g_3$ with $g_1$ and $g_2$ are periodic functions with same period.} Linear differential equations of the form $g_1y' + g_2y = g_3$ arise in various concrete applications such as complex dynamical systems and fluid mechanics. The operator theoretic approach to solving such equations or obtaining approximate solutions requires knowledge of the spectral behavior of the corresponding linear operators. In non-normal cases, understanding the behavior of pseudospectrum becomes crucial. Here, we observe that the path-connectedness of pseudospectrum is related to the oscillations of the potential.
	 
	 The finite difference approximation to the equation results in the following. Let $x_0$ be in the domain of $y$, then
	 \[
	 g_1(x_0)\left(\frac{y(x_0+h)-y(x_0-h)}{2h} \right)+ g_2(x_0)y(x_0)= g_3(x_0),
	 \]
	 and
	 \[
	 -g_1(x_0) y(x_0-h)+ 2h g_2(x_0)y(x_0)+ g_1(x_0)y(x_0+h)= 2h g_3(x_0).
	 \]
	 Assume without loss of generality that period of $g_1$ and $g_2$ is $1$. Choose $p$ such that $ph=1$. Denote $a_{i}= 2h\,g_2(x_0+ih)$ and $b_i= g_1(x_0+ih)$ for $i= 0, 1, \ldots, p-1$. This results in a periodic Jacobi operator defined by
	 \begin{align}\label{eqn1}
	 	J= \begin{bmatrix}
	 		\\
	 		\ddots&\ddots&\ddots\\
	 		&-b_{p-1}&a_0&b_0\\
	 		&&-b_0&a_1&b_1\\
	 		&&&\ddots&\ddots&\ddots\\
	 		&&&&-b_{p-2}&a_{p-1}&b_{p-1}\\
	 		&&&&&-b_{p-1}&a_0&b_0\\
	 		&&&&&&\ddots&\ddots&\ddots\\
	 		\\
	 	\end{bmatrix}
	 \end{align}
	 The periodic Jacobi operator results a doubly infinite block Toeplitz operator with matrix valued symbol 
	 \begin{align}\label{eqn2}
	 	\phi(\theta)= \begin{bmatrix}a_0&b_0&&&-b_{p-1}e^{-i\theta}\\-b_0&a_1&b_1&&&\\
	 		&\ddots&\ddots&\ddots&&\\&&-b_{p-3}&a_{p-2}&b_{p-2}\\
	 		b_{p-1}e^{i\theta}&&&-b_{p-2}&a_{p-1}\end{bmatrix}, \ \ -\pi\leq \theta\leq \pi.
	 \end{align}
	 \begin{remark}\label{pseudo3}
	 	Let $J$ be the discrete differential operator defined in \textnormal{\eqref{eqn1}} (the doubly infinite block Toeplitz operator with symbol $\phi$ defined in \textnormal{\eqref{eqn2}}). Then $a_i= 2hg_2(x_0+ih), b_i= g_1(x_0+ih)$, and $c_i= -b_{i+1}= -g_1(x_0+(i+1)h)$ for $i= 0, 1, \ldots, p-1$. Thus
	 	\begin{align*}
	 		\omega_{a_0}&= 2h\max\{|g_2(x_0+ih)-g_2(x_0)|: i= 0, 1,\ldots, p-1\},\\
	 		\omega_{b_0}&= \max\{|g_1(x_0+ih)-g_1(x_0)|: i= 0, 1,\ldots, p-1\},\\
	 		\omega_{b_0c}&= \max\{|g_1(x_0+ih)+g_1(x_0)|: i= 0, 1,\ldots, p-1\},
	 	\end{align*}
	 	and
	 	\begin{align*}
	 		\omega_{bc}&= \max\{|g_1(x_0+ih)+ g_1(x_0+(i+1)h)|: i= 0, 1,\ldots, p-1\}.
	 	\end{align*}
	 	Hence, if $\displaystyle \epsilon\geq 3\left(\sqrt{(p-1)\left(\omega_{a_0}^2+\omega_{b_0}^2+\omega_{b_0c}^2\right)}\right)$ then $\Lambda_{\epsilon}(A)$ is path-connected (from Theorem \ref{pseudo2}). Conversely, if $\Lambda_{\epsilon}(A)$ is path-connected for some $\e\geq \omega_{bc}$, then $\displaystyle \omega_a\leq 2(\epsilon+\omega_{bc})(p-1)$ (from Theorem \ref{pseudo2conv}).
	 \end{remark}
	 \subsection{$y^{''}+ g_1y^{'}+ g_2y= g_3$ with $g_1$ is constant and $g_2$ is periodic.} The finite difference approximation to the equation results in the following. Let $x_0$ be in the domain of $y$, then
	 \[
	 \frac{y(x_0+h)+ y(x_0-h)- 2\,y(x_0)}{h^2}+ g_1\left(\frac{y(x_0+h)- y(x_0-h)}{2h}\right)+ g_2(x_0) y(x_0)= g_3(x_0),
	 \]
	 \[
	 \left(\frac{1}{h^2}- \frac{g_1}{2h}\right)y(x_0-h)+ \left(g_2(x_0)- \frac{2}{h^2}\right)y(x_0)+ \left(\frac{1}{h^2}+ \frac{g_1}{2h}\right)y(x_0+h)= g_3(x_0),
	 \]
	 and
	 \[
	 (2-hg_1)\ y(x_0-h)+ (2h^2g_2(x_0)-4)\ y(x_0)+ (2+hg_1)\ y(x_0+h)= 2h^2g_3(x_0).
	 \]
	 Assume without loss of generality that the period of $g_2$ is 1. Choose $p$ such that $ph= 1$. Denote $a_i= 2h^2g_2(x_0+ih)-4$ for $i= 0,1,\ldots p-1$, $c= 2-hg_1$, and  $b= 2+ hg_1$. This results in a periodic Jacobi operator defined by 
	 \begin{align}\label{eqn3}
	 	J= \begin{bmatrix}
	 		\ddots&\ddots\\
	 		\ddots&a_0&b&\\
	 		&c&a_1&b\\
	 		&&\ddots&\ddots&\ddots\\
	 		&&&c&a_{p-1}&b\\
	 		&&&&c&a_0&\ddots\\
	 		&&&&&\ddots&\ddots
	 	\end{bmatrix}.
	 \end{align}
	 The periodic Jacobi operator results a doubly infinite block Toeplitz operator 
	 with matrix valued symbol 
	 \begin{align}\label{eqn4}
	 	\phi(\theta)= \begin{bmatrix}a_0&b&&&ce^{-i\theta}\\c&a_1&b&&&\\
	 		&\ddots&\ddots&\ddots&&\\&&c&a_{p-2}&b\\
	 		be^{i\theta}&&&c&a_{p-1}\end{bmatrix}, \ \ -\pi\leq \theta\leq \pi.
	 \end{align}
	 \begin{remark}
	 	Let $J$ be the discrete differential operator defined in \eqref{eqn3} (i.e., the doubly infinite block Toeplitz operator with symbol $\phi$ defined in \eqref{eqn4}. Then $|b-c|= 2h|g_1|$, 
	 	\begin{align*}
	 		\omega_{a_0}&= 2h^2\max\{|g_2(x_0+ih)-g_2(x_0)|: i= 1,\ldots, p-1\},
	 	\end{align*}
	 	and
	 	\begin{align*}
	 		\omega_{a}&= 2h^2\max\{|g_2(x_0+ih)-g_2(x_0+jh)|: i, j= 0, 1,\ldots, p-1\}.
	 	\end{align*}
	 	If $\displaystyle \epsilon\geq 3\omega_{a_0}$ then $\Lambda_{\epsilon}(A)$ is path-connected (from Theorem \ref{pseudo}). Conversely if $\Lambda_{\epsilon}(A)$ is path-connected for some $\e\geq 2h|g_1|$, then $\displaystyle \omega_a\leq 2(\epsilon+|b-c|)(p-1)$ (from Theorem \ref{pseudoconverse}).
	 \end{remark}
	 
	 \subsection{$y^{''}+ g_1y^{'}+ g_2y= g_3$ with $g_1, g_2$ are periodic functions with the same period}
	 Let $x_0$ be in the domain of the solution $y$. The finite difference approximation to the equation results the periodic Jacobi operator defined by
	 \begin{align}\label{eqn5}
	 	J= \begin{bmatrix}
	 		\ddots&\ddots\\
	 		\ddots&a_0&b_0&\\
	 		&c_0&a_1&b_1\\
	 		&&\ddots&\ddots&\ddots\\
	 		&&&c_{p-2}&a_{p-1}&b_{p-1}\\
	 		&&&&c_{p-1}&a_0&\ddots\\
	 		&&&&&\ddots&\ddots
	 	\end{bmatrix},
	 \end{align}
	 where $a_i= h^2g_2(x_0+ih)-4$, $b_i= 2+ h g_1(x_0+ih)$, and $c_i= 2- hg_1(x_0+(i+1)h)$, $i= 0, 1, \ldots, p-1$. The periodic Jacobi operator results a doubly infinite block Toeplitz operator with matrix valued symbol 
	 \begin{align}\label{eqn6}
	 	\phi(\theta)= \begin{bmatrix}a_0&b_0&&&c_{p-1}e^{-i\theta}\\c_0&a_1&b_1&&&\\
	 		&\ddots&\ddots&\ddots&&\\&&c_{p-3}&a_{p-2}&b_{p-2}\\
	 		b_{p-1}e^{i\theta}&&&c_{p-2}&a_{p-1}\end{bmatrix}, \ \ -\pi\leq \theta\leq \pi.
	 \end{align}
	 \begin{remark}
	 	Let $J$ be the discrete differential operator defined in \eqref{eqn5} (i.e., the doubly infinite block Toeplitz operator with symbol $\phi$ defined in \eqref{eqn6}. Then 
	 	\begin{align*}
	 		\omega_{a_0}&= h^2\max\{|g_2(x_0+ih)-g_2(x_0)|: i= 0, 1,\ldots, p-1\},\\
	 		\omega_{b_0}&= h\max\{|g_1(x_0+ih)-g_1(x_0)|: i= 0, 1,\ldots, p-1\},\\
	 		\omega_{b_0c}&= h\max\{|g_1(x_0+ih)+g_1(x_0)|: i= 0, 1,\ldots, p-1\},
	 	\end{align*}
	 	and
	 	\begin{align*}
	 		\omega_{bc}&= h\max\{|g_1(x_0+ih)+ g_1(x_0+(i+1)h)|: i= 0, 1,\ldots, p-1\}.
	 	\end{align*}
	 	If $\displaystyle \epsilon\geq 3\left(\sqrt{(p-1)\left(\omega_{a_0}^2+\omega_{b_0}^2+\omega_{b_0c}^2\right)}\right)$ then $\Lambda_{\epsilon}(A)$ is path-connected (from Theorem \ref{pseudo2}). Conversely if $\Lambda_{\epsilon}(A)$ is path-connected for some $\e\geq \omega_{bc}$, then $\displaystyle \omega_a\leq 2(\epsilon+\omega_{bc})(p-1)$ (from Theorem \ref{pseudo2conv}).
	 \end{remark}
	 
	 \subsection{Numerical Illustrations}
	 Consider a non-normal periodic Jacobi operator $J$ with matrix valued symbol defined by
	 \[
	 \phi(\theta)= \begin{bmatrix}
	 	-1&1&0&0&-e^{i\theta}\\
	 	-1.25&-0.5&1.25&0&0\\
	 	0&-1.5&0&1.5&0\\
	 	0&0&-1.75&0.5&1.75\\
	 	2e^{i\theta}&0&0&-2&1
	 \end{bmatrix}, \ \ \ -\pi\leq \theta\leq \pi.
	 \]
	 Write $\phi(\theta)= \phi_1(\theta)+ \phi_2(\theta)$, where 
	 \[
	 \phi_1(\theta)= \begin{bmatrix}
	 	-1&1&0&0&e^{-i\theta}\\
	 	1&-1&1&0&0\\
	 	0&1&-1&1&0\\
	 	0&0&1&-1&1\\
	 	e^{i\theta}&0&0&1&-1
	 \end{bmatrix}
	 \ \ \ \text{and} \ \ \ \phi_2(\theta)= \begin{bmatrix}
	 	0&0&0&0&-2e^{-i\theta}\\
	 	-2.25&0.5&0.25&0&0\\
	 	0&-2.5&1&0.5&0\\
	 	0&0&-2.75&1.5&0.75\\
	 	e^{i\theta}&0&0&-3&2
	 \end{bmatrix}.
	 \]
	 Then $\phi_1(\theta)$ is normal for every $-\pi\leq \theta\leq \pi$. The following figures are obtained, and it turns out that  $\displaystyle \sigma(J)= \bigcup_{-\pi \leq\theta\leq \pi} \sigma(\phi(\theta))$ is not path-connected, but $\displaystyle \bigcup_{-\pi\leq \theta\leq \pi} \Lambda_{\|\phi_2(\theta)\|}(\phi(\theta))$ is path-connected.
	 \begin{center}
	 	\begin{tabular}{cc}
	 		\includegraphics[width=.4\linewidth]{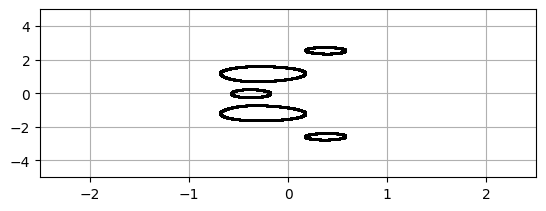}&
	 		\includegraphics[width=.4\linewidth]{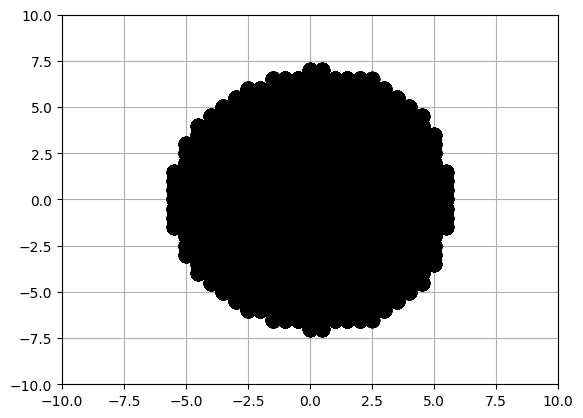}\\
	 		$\displaystyle \sigma(J)= \bigcup_{-\pi \leq\theta\leq \pi} \sigma(\phi(\theta))$&
	 		$\displaystyle \bigcup_{-\pi\leq \theta\leq \pi} \Lambda_{\|\phi_2(\theta)\|}(\phi(\theta))$
	 	\end{tabular}
	 \end{center}
	 
	\section{Concluding Remarks and Future Problems}
We conclude this article by highlighting some intriguing avenues for future research. Understanding the connection between the number of spectral gaps and the properties of the potential function has been a focal point in the continuous case \cite{Fla, goldberg-1, goldberg-2, hoschd-1}. Exploring discrete analogues of such results poses an interesting and general question. Additionally, one can anticipate discovering a relationship between the number of spectral gaps and the periodicity of the potential sequence.

To provide further clarity on this matter, let's consider an example.
	\subsection{Number of gaps and the periodicity index}
In the context of self-adjoint operators, it is imperative to conduct a detailed investigation into the relationship between the number of spectral gaps and the period. A notable example from \cite{kiran} provides some insights into this connection, revealing that the number of spectral gaps in this case is precisely $p-1$.

Consider the symbol $f_0(\theta)$ defined as follows:

	\[
	f_0(\theta)=\left[ {\begin{array}{*{20}c}
			{0} & 1 & {} & {} & {} & {e^{-i\theta}}  \\
			1 & {0 } & 1 & {} & {} & {}  \\
			{} & 1 & . & . & {} & {}  \\
			{} & {} & . & . & . & {}  \\
			{} & {} & {} & . & 0 &  1 \\
			{e^{i\theta}} & {} & {} & {} & 1 & {1}  \\
	\end{array}} \right].
	\]

In this specific example, the number of spectral gaps is established to be $p-1$. Generally, one can expect that if $A$ is a self-adjoint periodic Jacobi operator with an essential period of $p$, the number of gaps in the essential spectrum of $A$ will be on the order of $p$.
\subsection{The Optimal Inequalities}
The inequalities presented in Theorem \ref{pseudoconverse}, Theorem \ref{pseudo1conv}, and Theorem \ref{pseudo2conv} may not offer the most optimal bounds. Delving into the exploration and derivation of improved inequalities stands out as a valuable quantitative problem with potential applications. Furthermore, understanding how these inequalities contribute to obtaining approximate solutions for the differential equations discussed in Section \ref{sec4} remains an open question. Investigating their practical implications in the context of solving or approximating solutions to specific differential equations could provide additional insights. The enduring impact of B\"org's theorem, echoing through decades and captivating the interests of pure and applied mathematicians as well as physicists, is expected to maintain its legacy status for many more years to come.
	
	\subsection*{Acknowledgements} 
	Krishna Kumar G.  (SQUID-1984-KG-1801) is supported by the SERB MATRICS grant with project reference no. MTR/2021/000028, and  KSCSTE, Government of Kerala, India through Kerala State Young Scientist Award Research Grant, Order No. 88/2023/KSCSTE dated 13-03-2023. V. B. Kiran Kumar is supported by KSCSTE, Kerala via KSYSA-Research Grant.
	%\nocite{*}
	\bibliographystyle{amsplain}
	\bibliography{References}

\providecommand{\bysame}{\leavevmode\hbox to3em{\hrulefill}\thinspace}
\providecommand{\MR}{\relax\ifhmode\unskip\space\fi MR }
% \MRhref is called by the amsart/book/proc definition of \MR.
\providecommand{\MRhref}[2]{%
  \href{http://www.ams.org/mathscinet-getitem?mr=#1}{#2}
}
\providecommand{\href}[2]{#2}
\begin{thebibliography}{10}

\bibitem{Bellassoued20}
Mourad Bellassoued and Yosra Mannoubi, \emph{A partial data inverse problem for
  the electro-magnetic wave equation and application to the related
  {B}org-{L}evinson theorem}, Inverse Problems \textbf{36} (2020), no.~11,
  115001, 19. \MR{4173576}

\bibitem{Bellman-1}
Richard Bellman and John~M. Richardson, \emph{A note on an inverse problem in
  mathematical physics}, Quart. Appl. Math. \textbf{19} (1961), 269--271.
  \MR{130417}

\bibitem{borg}
G\"{o}ran Borg, \emph{Eine {U}mkehrung der {S}turm-{L}iouvilleschen
  {E}igenwertaufgabe. {B}estimmung der {D}ifferentialgleichung durch die
  {E}igenwerte}, Acta Math. \textbf{78} (1946), 1--96. \MR{15185}

\bibitem{Brown2005}
B.~M. Brown and R.~Weikard, \emph{A {B}org-{L}evinson theorem for trees}, Proc.
  R. Soc. Lond. Ser. A Math. Phys. Eng. Sci. \textbf{461} (2005), no.~2062,
  3231--3243. \MR{2172226}

\bibitem{Buterin19}
Sergey Buterin and Maria Kuznetsova, \emph{On {B}org's method for
  non-selfadjoint {S}turm-{L}iouville operators}, Anal. Math. Phys. \textbf{9}
  (2019), no.~4, 2133--2150. \MR{4038126}

\bibitem{Carlson}
Robert Carlson, \emph{An inverse problem for the matrix {S}chr\"{o}dinger
  equation}, J. Math. Anal. Appl. \textbf{267} (2002), no.~2, 564--575.
  \MR{1888024}

\bibitem{elec}
Sagun Chanillo, \emph{A problem in electrical prospection and an
  {$n$}-dimensional {B}org-{L}evinson theorem}, Proc. Amer. Math. Soc.
  \textbf{108} (1990), no.~3, 761--767. \MR{998731}

\bibitem{Clark2002}
Steve Clark and Fritz Gesztesy, \emph{Weyl-{T}itchmarsh {$M$}-function
  asymptotics, local uniqueness results, trace formulas, and {B}org-type
  theorems for {D}irac operators}, Trans. Amer. Math. Soc. \textbf{354} (2002),
  no.~9, 3475--3534. \MR{1911509}

\bibitem{clark2000}
Steve Clark, Fritz Gesztesy, Helge Holden, and Boris~M. Levitan,
  \emph{Borg-type theorems for matrix-valued {S}chr\"{o}dinger operators}, J.
  Differential Equations \textbf{167} (2000), no.~1, 181--210. \MR{1785118}

\bibitem{Derevyagin06}
M.~S. Derevyagin, \emph{Borg-type theorems for generalized {J}acobi matrices
  and trace formulas}, Methods Funct. Anal. Topology \textbf{12} (2006), no.~3,
  220--233. \MR{2261576}

\bibitem{Derevyagin03}
Maxim Derevyagin and Vladimir Derkach, \emph{Spectral problems for generalized
  {J}acobi matrices}, Linear Algebra Appl. \textbf{382} (2004), 1--24.
  \MR{2050096}

\bibitem{brune95}
Bruno Despr\'{e}s, \emph{The {B}org theorem for the vectorial {H}ill's
  equation}, Inverse Problems \textbf{11} (1995), no.~1, 97--121. \MR{1313602}

\bibitem{Fla}
H.~Flaschka, \emph{Discrete and periodic illustrations of some aspects of the
  inverse method}, Dynamical systems, theory and applications ({R}encontres,
  {B}attelle {R}es. {I}nst., {S}eattle, {W}ash., 1974), 1975, pp.~441--466.
  Lecture Notes in Phys., Vol. 38. \MR{0455033}

\bibitem{gelfand}
I.~M. Gelfand and B.~M. Levitan, \emph{On the determination of a differential
  equation from its spectral function}, Izv. Akad. Nauk SSSR Ser. Mat.
  \textbf{15} (1951), 309--360. \MR{45281}

\bibitem{Barry2000-1}
Fritz Gesztesy and Barry Simon, \emph{Inverse spectral analysis with partial
  information on the potential. {I}. {T}he case of an a.c. component in the
  spectrum}, vol.~70, 1997, Papers honouring the 60th birthday of Klaus Hepp
  and of Walter Hunziker, Part II (Z\"{u}rich, 1995), pp.~66--71. \MR{1441597}

\bibitem{Barry2000-2}
\bysame, \emph{Inverse spectral analysis with partial information on the
  potential. {II}. {T}he case of discrete spectrum}, Trans. Amer. Math. Soc.
  \textbf{352} (2000), no.~6, 2765--2787. \MR{1694291}

\bibitem{Gesztesy06}
Fritz Gesztesy and Maxim Zinchenko, \emph{A {B}org-type theorem associated with
  orthogonal polynomials on the unit circle}, J. London Math. Soc. (2)
  \textbf{74} (2006), no.~3, 757--777. \MR{2286444}

\bibitem{goldberg-1}
Wallace Goldberg, \emph{On the determination of a {H}ill's equation from its
  spectrum}, Bull. Amer. Math. Soc. \textbf{80} (1974), 1111--1112. \MR{364726}

\bibitem{goldberg-2}
\bysame, \emph{On the determination of a {H}ill's equation from its spectrum},
  J. Math. Anal. Appl. \textbf{51} (1975), no.~3, 705--723. \MR{372301}

\bibitem{kiran}
L.~Golinskii, V.B.~Kiran Kumar, M.~N.~N. Namboodiri, and S.~Serra-Capizzano,
  \emph{A note on a discrete version of {B}org's theorem via
  {T}oeplitz-{L}aurent operators with matrix-valued symbols}, Boll. Unione Mat.
  Ital. (9) \textbf{6} (2013), no.~1, 205--218. \MR{3076848}

\bibitem{Gol18}
Leonid Golinskii, \emph{On stability in the {B}org-{H}ochstadt theorem for
  periodic {J}acobi matrices}, J. Spectr. Theory \textbf{9} (2019), no.~4,
  1507--1521. \MR{4033529}

\bibitem{Hald}
Ole~H. Hald, \emph{The inverse {S}turm-{L}iouville problem and the
  {R}ayleigh-{R}itz method}, Math. Comp. \textbf{32} (1978), no.~143, 687--705.
  \MR{501963}

\bibitem{hoschd-1}
Harry Hochstadt, \emph{On the determination of a {H}ill's equation from its
  spectrum}, Arch. Rational Mech. Anal. \textbf{19} (1965), 353--362.
  \MR{181792}

\bibitem{hoschd-2}
\bysame, \emph{On the determination of a {H}ill's equation from its spectrum.
  {II}}, Arch. Rational Mech. Anal. \textbf{23} (1966), 237--238. \MR{200520}

\bibitem{Horv01}
Mikl\'{o}s Horv\'{a}th, \emph{On the inverse spectral theory of
  {S}chr\"{o}dinger and {D}irac operators}, Trans. Amer. Math. Soc.
  \textbf{353} (2001), no.~10, 4155--4171. \MR{1837225}

\bibitem{Huang}
Min-Jei Huang, \emph{The first instability interval for {H}ill equations with
  symmetric single well potentials}, Proc. Amer. Math. Soc. \textbf{125}
  (1997), no.~3, 775--778. \MR{1363425}

\bibitem{shape}
Mark Kac, \emph{Can one hear the shape of a drum?}, Amer. Math. Monthly
  \textbf{73} (1966), no.~4, part II, 1--23. \MR{201237}

\bibitem{karaseva-1}
T.~M. Karaseva, \emph{On the inverse {S}turm-{L}iouville problem for a
  non-{H}ermitian operator}, Mat. Sbornik N.S. \textbf{32(74)} (1953),
  477--484. \MR{55536}

\bibitem{Kian18}
Yavar Kian, \emph{A multidimensional {B}org-{L}evinson theorem for magnetic
  {S}chr\"{o}dinger operators with partial spectral data}, J. Spectr. Theory
  \textbf{8} (2018), no.~1, 235--269. \MR{3762132}

\bibitem{Korotyaev09}
Evgeny Korotyaev and Anton Kutsenko, \emph{Borg-type uniqueness theorems for
  periodic {J}acobi operators with matrix-valued coefficients}, Proc. Amer.
  Math. Soc. \textbf{137} (2009), no.~6, 1989--1996. \MR{2480280}

\bibitem{krein-1}
M.~G. Kre\u{\i}n, \emph{Solution of the inverse {S}turm-{L}iouville problem},
  Doklady Akad. Nauk SSSR (N.S.) \textbf{76} (1951), 21--24. \MR{39895}

\bibitem{kri}
G.~Krishnakumar and S.~H. Kulkarni, \emph{Banach algebra techniques to compute
  spectra, pseudospectra and condition spectra of some block operators with
  continuous symbols}, Ann. Funct. Anal. \textbf{6} (2015), no.~1, 148--169.
  \MR{3297794}

\bibitem{aru}
Arundhathi Krishnan and S.~H. Kulkarni, \emph{Pseudospectrum of an element of a
  {B}anach algebra}, Oper. Matrices \textbf{11} (2017), no.~1, 263--287.
  \MR{3602643}

\bibitem{Krupchyk12}
Katsiaryna Krupchyk and Lassi P\"{a}iv\"{a}rinta, \emph{A {B}org-{L}evinson
  theorem for higher order elliptic operators}, Int. Math. Res. Not. IMRN
  (2012), no.~6, 1321--1351. \MR{2899954}

\bibitem{kikri}
V.~B.~Kiran Kumar and G.~Krishna Kumar, \emph{A note on discrete {B}org-type
  theorems}, Linear Multilinear Algebra \textbf{66} (2018), no.~7, 1418--1433.
  \MR{3795814}

\bibitem{KSN1}
V.~B.~Kiran Kumar, M.~N.~N. Namboodiri, and S.~Serra-Capizzano,
  \emph{Perturbation of operators and approximation of spectrum}, Proc. Indian
  Acad. Sci. Math. Sci. \textbf{124} (2014), no.~2, 205--224. \MR{3218891}

\bibitem{levinson-1}
Norman Levinson, \emph{The inverse {S}turm-{L}iouville problem}, Mat. Tidsskr.
  B \textbf{1949} (1949), 25--30. \MR{32067}

\bibitem{marchenko-0}
V.~A. Mar\v{c}enko, \emph{Concerning the theory of a differential operator of
  the second order}, Doklady Akad. Nauk SSSR (N.S.) \textbf{72} (1950),
  457--460. \MR{36916}

\bibitem{marchenko-1}
\bysame, \emph{Some questions of the theory of one-dimensional linear
  differential operators of the second order. {I}}, Trudy Moskov. Mat.
  Ob\v{s}\v{c}. \textbf{1} (1952), 327--420. \MR{58064}

\bibitem{marchenko-2}
\bysame, \emph{Some questions of the theory of one-dimensional linear
  differential operators of the second order. {II}}, Trudy Moskov. Mat.
  Ob\v{s}\v{c}. \textbf{2} (1953), 3--83. \MR{58065}

\bibitem{Pohjola18}
Valter Pohjola, \emph{Multidimensional {B}org-{L}evinson theorems for unbounded
  potentials}, Asymptot. Anal. \textbf{110} (2018), no.~3-4, 203--226.
  \MR{3885496}

\bibitem{Rya}
T.~I. Ryabushko, \emph{Estimation of the norm of the difference of two
  potentials of {S}turm-{L}iouville boundary value problems}, Teor.
  Funktsi\u{\i} Funktsional. Anal. i Prilozhen. (1983), no.~39, 114--117.
  \MR{734693}

\bibitem{Shieh04}
Chung-Tsun Shieh, \emph{Some inverse problems on {J}acobi matrices}, Inverse
  Problems \textbf{20} (2004), no.~2, 589--600. \MR{2065442}

\bibitem{Kwang04}
Kwang~C. Shin, \emph{Trace formulas for non-self-adjoint periodic
  {S}chr\"{o}dinger operators and some applications}, J. Math. Anal. Appl.
  \textbf{299} (2004), no.~1, 19--39. \MR{2091267}

\bibitem{Sodin}
Mikhail Sodin and Peter Yuditskii, \emph{Almost periodic {J}acobi matrices with
  homogeneous spectrum, infinite-dimensional {J}acobi inversion, and {H}ardy
  spaces of character-automorphic functions}, J. Geom. Anal. \textbf{7} (1997),
  no.~3, 387--435. \MR{1674798}

\bibitem{tre1}
Lloyd~N. Trefethen, \emph{Pseudospectra of linear operators}, SIAM Rev.
  \textbf{39} (1997), no.~3, 383--406. \MR{1469941}

\bibitem{tre}
Lloyd~N. Trefethen and Mark Embree, \emph{Spectra and pseudospectra. {T}he
  behavior of nonnormal matrices and operators}, Princeton University Press,
  Princeton, NJ, 2005. \MR{2155029}

\bibitem{Yurko}
V.~A. Yurko, \emph{On the reconstruction of {S}turm-{L}iouville differential
  operators with singularities inside the interval}, Mat. Zametki \textbf{64}
  (1998), no.~1, 143--156. \MR{1694045}

\bibitem{Maxim10}
Maxim Zinchenko, \emph{Trace formulas and a {B}org-type theorem for {CMV}
  operators with matrix-valued coefficients}, Math. Nachr. \textbf{283} (2010),
  no.~2, 312--329. \MR{2604124}

\end{thebibliography}
	
\end{document}